\documentclass[twoside,11pt]{article}
\usepackage{a4}
\sloppy
\parskip=1ex

\usepackage{amssymb}
\usepackage{amsmath}
\usepackage{amsthm}
\usepackage{enumerate}
\usepackage{color}

\newtheorem{theorem}{Theorem}
\newtheorem{corollary}{Corollary}
\newtheorem{lemma}{Lemma}

\newtheorem*{claim*}{Claim}

\newtheorem*{conjecture*}{Conjecture}

\newtheorem{theorem MTP}{Mass Transference Principle}
\newtheorem*{theorem MTP*}{Mass Transference Principle}
\newtheorem{theorem GMTP}{Theorem MTP*}
\newtheorem*{theorem GMTP*}{Theorem MTP*}
\newtheorem{theorem K}{Khintchine's Theorem}
\newtheorem*{theorem K*}{Khintchine's Theorem}
\newtheorem{theorem J}{Jarn\'{\i}k's Theorem}
\newtheorem*{theorem J*}{Jarn\'{\i}k's Theorem}
\newtheorem{theorem KJ}{Khintchine--Jarn\'{\i}k Theorem}
\newtheorem*{theorem KJ*}{Khintchine--Jarn\'{\i}k Theorem}
\newtheorem{theorem BV1}{Theorem BV1}
\newtheorem*{theorem BV1*}{Theorem BV1}
\newtheorem{theorem BV2}{Theorem BV2}
\newtheorem*{theorem BV2*}{Theorem BV2}
\newtheorem{theorem KG}{Theorem KG}
\newtheorem*{theorem KG*}{Theorem KG}
\newtheorem{theorem IHKG}{Inhomogeneous Khintchine--Groshev Theorem}
\newtheorem*{theorem IHKG*}{Inhomogeneous Khintchine--Groshev Theorem}
\newtheorem{theorem DLN1}{Theorem DLN1}
\newtheorem*{theorem DLN1*}{Theorem DLN1}
\newtheorem{theorem DLN2}{Theorem DLN2}
\newtheorem*{theorem DLN2*}{Theorem DLN2}
\newtheorem{theorem DLN3}{Theorem DLN3}
\newtheorem*{theorem DLN3*}{Theorem DLN3}
\newtheorem{theorem S}{Theorem S}
\newtheorem*{theorem S*}{Theorem S}
\newtheorem{theorem AB}{Theorem AB}
\newtheorem*{theorem AB*}{Theorem AB}
\theoremstyle{definition}

\theoremstyle{remark}

\newtheorem*{remark*}{Remark}
\newtheorem{example}{Example}
\newtheorem*{example*}{Example}


\renewcommand{\Bbb}[1]{\mathbb{#1}}
\newcommand{\N}{{\Bbb N}}         

\newcommand{\R}{{\Bbb R}}        
\newcommand{\K}{{\Bbb K}}         

\newcommand{\cA}{{\cal A}}

\newcommand{\cC}{{\cal C}}

\newcommand{\cF}{{\cal F}}
\newcommand{\cG}{{\cal G}}
\newcommand{\cH}{{\cal H}}

\newcommand{\cL}{{\cal L}}

\newcommand{\cR}{{\cal R}}


\newcommand{\Om}{\Omega}
\newcommand{\U}{\Upsilon}
\newcommand{\La}{\Lambda}
\newcommand{\ka}{\kappa}

\newcommand{\tU}{\tilde\Upsilon}

\newcommand{\Pj}{\Phi_j}


\newcommand{\diam}{r}
\newcommand{\dist}{\operatorname{dist}}

\newcommand{\vv}[1]{{\mathbf{#1}}}

\renewcommand{\le}{\leq}
\renewcommand{\ge}{\geq}

\newcommand{\kgb}{K_{G,B}}

\newcommand{\hf}{\cH^f}
\newcommand{\hs}{\cH^s}

\newcommand{\Lj}{(L;j)}
\newcommand{\aj}{(A;j)}
\newcommand{\ajj}{(A';j')}

\newcommand{\caj}{\cC\aj}

\DeclareMathOperator{\dimh}{dim_H}
\DeclareMathOperator{\udimb}{\overline{dim}_B}
\DeclareMathOperator{\ldimb}{\underline{dim}_B}
\DeclareMathOperator{\dimb}{dim_B}

\newcommand{\Addresses}{{
  \bigskip
  \footnotesize

  D.~Allen, \textsc{School of Mathematics, University of Manchester, Oxford Road, Manchester, M13 9PL, UK}\par\nopagebreak
  \textit{E-mail address:} \texttt{demi.allen@manchester.ac.uk}

  \medskip

  S.~Baker, \textsc{Mathematics Institute, Zeeman Building, University of Warwick, Coventry, CV4 7AL, UK}\par\nopagebreak
  \textit{E-mail address:} \texttt{simonbaker412@gmail.com}
}}

\begin{document}

\title{\bf A General Mass Transference Principle}
\author{Demi Allen\footnote{EPSRC Doctoral Prize Fellow supported by grant EP/N509565/1} \and Simon Baker\footnote{Supported by EPSRC grant EP/M001903/1}}
\date{\footnotesize{\it In loving memory of Mandy Jayne Allen  (1968--2018).}}
\maketitle
\begin{abstract}
The Mass Transference Principle proved by Beresnevich and Velani in 2006 is a celebrated and highly influential result which allows us to infer Hausdorff measure statements for $\limsup$ sets of balls in $\R^n$ from \emph{a priori} weaker Lebesgue measure statements. The Mass Transference Principle and subsequent generalisations have had a profound impact on several areas of mathematics, especially Diophantine Approximation. In the present paper, we prove a considerably more general form of the Mass Transference Principle which extends known results of this type in several distinct directions. In particular, we establish a mass transference principle for $\limsup$ sets defined via neighbourhoods of sets satisfying a certain local scaling property. Such sets include self-similar sets satisfying the open set condition and smooth compact manifolds embedded in $\mathbb{R}^n$. Furthermore, our main result is applicable in locally compact metric spaces and allows one to transfer Hausdorff $g$-measure statements to Hausdorff $f$-measure statements. We conclude the paper with an application of our mass transference principle to a general class of random $\limsup$ sets.  
\end{abstract}
\noindent{\small 2000 {\it Mathematics Subject Classification}\/: Primary 11J83, 28A78; Secondary 11K60.
}\bigskip

\noindent{\small{\it Keywords and phrases}\/: Mass Transference Principle, Hausdorff measures, $\limsup$ sets, Diophantine Approximation.}

\section{Introduction}\label{sec:1}

\subsection{Background} \label{background section}
In Diophantine Approximation, Dynamical Systems, and Probability Theory, many sets of interest can be characterised as $\limsup$ sets. Recall that, given a countable collection of sets $(E_j)_{j \in \N},$ we define the corresponding $\limsup$ set to be
\begin{align*}
\limsup_{j \to \infty}{E_j} &:=\bigcap_{j=1}^{\infty}\bigcup_{n=j}^{\infty}E_j\\
                            &=\big\{x:x\in E_j \textrm{ for infinitely many }j\in\mathbb{N}\big\}.
\end{align*} Often we are interested in determining the metric properties of $\limsup E_j.$ When the sequence $(E_j)_{j \in \N}$ is a collection of balls a powerful tool in determining the metric properties of $\limsup E_j$ is the Mass Transference Principle~\cite{BV MTP}, which allows us to infer Hausdorff measure statements from seemingly less general Lebesgue measure statements.

Given a ball $B:=B(x,r)$ in $\R^n$ and a dimension function $f:\R^+ \to \R^+$ (see Section \ref{preliminaries section} for definitions), define another corresponding ball $B^f:=B(x,f(r)^{\frac{1}{n}})$. Throughout, $\R^+:=[0,\infty)$. When $f(r)=r^s$ for some real number $s > 0$, we write $B^s$ in place of $B^f$. In particular, $B^n = B$ for $n \in \N$. The following was established by Beresnevich and Velani in \cite{BV MTP}.

\begin{theorem MTP*} \label{mtp theorem}
Let $(B_j)_{j\in\N}$ be a sequence of balls in $\R^n$ with
$\diam(B_j)\to 0$ as $j\to\infty$. Let $f$ be a dimension function
such that $x^{-n}f(x)$ is monotonic and suppose that, for any ball $B$ in $\R^n$,
\[ \cH^n\big(\/B\cap\limsup_{j\to\infty}B^f_j{}\,\big)=\cH^n(B) \ .\]
Then, for any ball $B$ in $\R^n$,
\[ \cH^f\big(\/B\cap\limsup_{j\to\infty}B^n_j\,\big)=\cH^f(B) \ .\]
\end{theorem MTP*} 

We denote by $\cH^f(X)$ the Hausdorff $f$-measure of a set $X \subset \R^n$. For $s \geq 0$, $\cH^s(X)$ denotes the standard Hausdorff $s$-measure of $X$. These notions will be formally introduced in Section \ref{preliminaries section}. It is worth noting at this point though that if $X$ is a Borel subset of $\R^n$, then $\cH^n(X)$ is a constant multiple times the $n$-dimensional Lebesgue measure of $X$ (see \cite{Falconer ref} for further details). Thus, as discussed previously, the Mass Transference Principle genuinely does enable us to transfer Lebesgue measure statements to Hausdorff measure ones.

A generalisation of the Mass Transference Principle, which is applicable to $\limsup$ sets of balls in locally compact metric spaces and not restricted to $\limsup$ sets in $\R^n$, was also given by Beresnevich and Velani in \cite{BV MTP}. Furthermore, this generalisation allows for the transference of Hausdorff $g$-measure (not just Lebesgue measure) statements to Hausdorff $f$-measure statements, where $g$ and $f$ are dimension functions subject to some mild conditions. Before stating this result formally, we require some preliminaries.

We say that a function $f: \R^+ \to \R^+$ is \emph{doubling} if there exists a constant $\lambda>1$ such that 
\begin{equation}
\label{doubling}
f(2x)<\lambda f(x)
\end{equation} for all $x>0$. 

Let $(X,d)$ be a locally compact metric space and let $g$ be a doubling dimension function. Moreover, suppose that there exist constants \mbox{$0<c_1<1<c_2<\infty$} and $r_0>0$ such that 
\begin{equation}
\label{gball}
c_1 g(r)\leq \cH^g(B(x,r))\leq c_2 g(r)
\end{equation} for all $x\in X$ and $0<r<r_0$. Given another dimension function $f$ and a ball $B:=B(x,r)$ in $X$ we define $$B^{f,g}:=B(x,g^{-1}(f(r))).$$ The following theorem was established in \cite{BV MTP}.

\begin{theorem GMTP*}
Let $(X,d)$ and $g$ be as above and let $(B_j)_{j \in \N}$ be a sequence of balls in $X$ with $r(B_j)\to 0$ as $j\to \infty$. Let $f$ be a dimension function such that $f/g$ is monotonic and suppose that for any ball $B$ in $X$ we have 
$$\mathcal{H}^g(B\cap \limsup_{j \to \infty} B_j^{f,g})=\mathcal{H}^g(B).$$ 
Then, for any ball $B$ in $X$,
$$\mathcal{H}^f(B\cap \limsup_{j \to \infty} B_j)=\mathcal{H}^f(B).$$ 
\end{theorem GMTP*}

Compared with the Mass Transference Principle, this general theorem applies to $\limsup$ sets of balls in more general metric spaces and deals with a larger class of (Hausdorff) measures. As an example, Theorem MTP* is applicable when $X$ is, say, the middle-third Cantor set. In this case, Theorem MTP* has been utilised in \cite{LSV ref} to solve a problem posed by Mahler regarding the existence of very well approximable points in the middle-third Cantor set.

The Mass Transference Principle was originally motivated by a desire to establish a Hausdorff measure analogue of the famous Duffin--Schaeffer Conjecture in Metric Number Theory. Since their initial announcement, the Mass Transference Principle and Theorem MTP* have been shown to have applications in many distinct areas of mathematics. In particular, the fields of Number Theory, Dynamical Systems, and Fractal Geometry have all benefited significantly from these results. For further applications of the Mass Transference Principle and Theorem MTP* see \cite{BBDV ref, BRV aspects, BV MTP, KRW, Sun}. 


In the Euclidean setting, Beresnevich and Velani extended the Mass Transference Principle in another direction to allow for the transference of Lebesgue measure statements to Hausdorff measure statements for $\limsup$ sets defined via neighbourhoods of ``approximating planes'' \cite{BV Slicing}. The result they obtained in this case, \cite[Theorem 3]{BV Slicing}, was subject to extra technical conditions arising from their particular proof strategy. Recently, the first author and Beresnevich have removed these additional constraints in \cite{AB ref}, thus unlocking a number of previously inaccessible applications in Diophantine Approximation. We include here a statement of \cite[Theorem 1]{AB ref} to enable ease of comparison of this mass transference principle for planes with the statement of the main result of the present article, namely Theorem \ref{general mtp theorem} below.

Let $n, m \geq 1$ and $l \geq 0$ be integers such that $n = m + l$. Let $\cR := (R_j)_{j \in \N}$ be a family of $l$-dimensional planes in $\R^n$. For every $j \in \N$ and $\delta \geq 0$, define
\[ \Delta(R_j,\delta) := \{\mathbf{x} \in \R^n : \dist(\mathbf{x},R_j)<\delta\},\]
where $\dist(\vv x,R_j)=\inf\{\|\vv x-\vv y\|:\vv y\in R_j\}$ and $\|\cdot\|$ is any fixed norm on $\R^n$.

Let $\U:= (\U_j)_{j \in \N}$ be a sequence of non-negative reals such that $\U_j \to 0$ as $j \to \infty$.
Consider
\[ \La(\U) := \{\mathbf{x} \in \R^n : \mathbf{x} \in \Delta(R_j, \U_j) \text{ for infinitely many } j \in \N\}.\]
The following is shown in \cite{AB ref}.

\begin{theorem AB*} \label{mtp for linear forms theorem}
Let $\cR$ and $\U$ be as given above. Let $f$ and $g : r \to g(r) := r^{-l}f(r)$ be dimension functions such that $r^{-n}f(r)$ is monotonic and let $\Om$ be a ball in $\R^n$. Suppose that, for any ball $B$ in $\Om$,
\[\cH^n\left(B \cap \La\left(g(\U)^{\frac{1}{m}}\right)\right) = \cH^n(B).\]
Then, for any ball $B$ in $\Om$,
\[\cH^f(B \cap \La(\U)) = \cH^f(B).\]
\end{theorem AB*}

While we will be concerned here with generalising the aforementioned variations of the Mass Transference Principle, we remark here, for completeness, that progress towards mass transference principles has also been made in some other settings. For example, progress towards proving a mass transference principle for rectangles in the Euclidean setting has been made by Wang, Wu and Xu in~\cite{WWX ref} and an implicit multifractal mass transference principle is given by Fan, Schmeling and Troubetzkoy in \cite{FST ref}.

In this paper, we extend the results of \cite{AB ref, BV MTP, BV Slicing} by proving a general version of the Mass Transference Principle that applies to $\limsup$ sets in a locally compact metric space which are defined in terms of neighbourhoods of sets satisfying a certain local scaling property (see Section \ref{main result section}). Our main result, Theorem \ref{general mtp theorem}, extends the known mass transference principles of \cite{AB ref, BV MTP, BV Slicing} in several manners. First of all, while it incorporates mass transference principles for balls and planes, Theorem \ref{general mtp theorem} is also applicable to more exotic sets. For example, we are able to consider $\limsup$ sets generated by sequences of neighbourhoods of smooth compact manifolds or self-similar fractals satisfying the open set condition. Furthermore, unlike in previously known variants of the Mass Transference Principle, as long as the local scaling property is satisfied, the sets generating the $\limsup$ sets in Theorem \ref{general mtp theorem} need not all be of the same type (e.g. all balls or all planes). Secondly, we deal with $\limsup$ sets in a locally compact metric space $(X,d)$ and are not confined to the Euclidean setting. Finally, the result we derive allows us to transfer Hausdorff $g$-measure statements to Hausdorff $f$-measure statements, where $f$ and $g$ are dimension functions subject to some mild conditions. 

Compared with its predecessors, the greater generality of Theorem \ref{general mtp theorem} opens up a number of new possible applications to explore. We include one such application in Section~\ref{Application} where we use Theorem~\ref{general mtp theorem} to deduce Hausdorff measure and dimension results for a family of random $\limsup$ sets. Within Diophantine Approximation, it is reasonable to expect that Theorem \ref{general mtp theorem} will enable the establishment of further Hausdorff measure statements relating to approximation on manifolds (see \cite{BDV limsup sets, BRV aspects} and the references therein for more on this problem). Within Dynamical Systems, it is also reasonable to expect that Theorem \ref{general mtp theorem} will allow one to study a wider class of shrinking target problems, in particular when our target is allowed to have a more exotic structure (see \cite{PR, SW, Sun} and the references therein for more on this problem). We hope to return to these topics in a later work. 


\subsection{The Main Result} \label{main result section}
Let $(X,d)$ be a locally compact metric space and let $g$ be a doubling dimension function satisfying \eqref{gball}. Given $F\subset X$ and $\delta \geq 0$, we define the $\delta$-neighbourhood of $F$ to be
$$\Delta(F,\delta):=\{x\in X:d(x,F)<\delta\},$$
where $d(x,F):=\min\{d(x,y): y \in F\}$.
 
The following \emph{local scaling property} appears to be the key which enables us to prove a ``unifying'' mass transference principle which incorporates and extends the results presented in Section \ref{background section}.

\noindent{\bf Local Scaling Property (LSP):}  
Given a sequence of sets \mbox{$\cF:=(F_j)_{j \in \N}$} in $X$ and $0\leq \kappa< 1$, we say that $\cF$ satisfies the \emph{local scaling property (LSP) with respect to $\ka$} if there exist constants $c_3,c_4,r_1>0$ such that, for any $0<r<r_1$, $\delta<r$, $j \in \N$ and $x\in F_j,$ we have 
\begin{equation} \label{local scaling}
c_3 g(\delta)^{1-\ka}\cdot g(r)^{\ka}\leq \cH^g(B(x,r)\cap \Delta(F_j,\delta))\leq c_4  g(\delta)^{1-\ka}\cdot g(r)^{\ka}.
\end{equation}

If \eqref{local scaling} is satisfied for one specific set $F$, say, we will also say that $F$ satisfies the local scaling property with respect to $\ka$. It should be clear from context when we are referring to an individual set and when we are referring to a sequence of sets.

If we restrict ourselves to the case where $X=\mathbb{R}^n$ and $\cH^g=\cH^n$ (that is, essentially, Lebesgue measure), then \eqref{local scaling} takes the form
\begin{equation} \label{Euclidean local scaling}
c_3 \delta^{n-\ka n}\cdot r^{\ka n}\leq \cH^n(B(x,r)\cap \Delta(F_j,\delta))\leq c_4  \delta^{n-\ka n}\cdot r^{\ka n}.
\end{equation}

Many familiar subsets of Euclidean space satisfy \eqref{Euclidean local scaling} for an appropriate choice of $\kappa$. For example, in the next section we show that any smooth compact manifold embedded in $\mathbb{R}^n$ satisfies \eqref{Euclidean local scaling}. In addition, we show that self-similar sets satisfying the open set condition also satisfy the LSP. In these examples we will see that $\kappa$ is related to the box counting dimension of the sets we are considering and we offer some discussion as to why this should be the case. In the meantime, we present here two trivial examples of sets satisfying the~LSP.
\begin{example} \label{points example}
If $\cF$ were a sequence of points, then it follows from \eqref{gball} that $\cF$ satisfies the LSP with respect to $\kappa=0$. 
\end{example}
\begin{example}
Suppose $F\subset \mathbb{R}^n$ is a specific set satisfying \eqref{Euclidean local scaling}. One can then define a sequence $\cF := (F_j)_{j \in \N}$ by defining each $F_j$ to be the image of $F$ under some isometry. Clearly $\cF$ provides us with a sequence of sets satisfying the LSP.
\end{example}

Next, suppose $(\U_j)_{j\in\N}$ is a sequence of non-negative reals such that $\U_j \to 0$ as $j \to \infty.$ Consider the $\limsup$ set
\[\La(\U):=\{x \in X: x \in \Delta(F_j,\U_j) \text{ for infinitely many } j \in \N\}.\]

Our main result is the following theorem.

\vbox{
\begin{theorem} \label{general mtp theorem}
Let $(X,d)$ and $g$ be as above. Let $\cF:=(F_j)_{j \in \N}$ be a sequence of sets satisfying the LSP with respect to some $0 \leq \ka < 1,$ and let $\U := (\U_j)_{j \in \N}$ be a sequence of non-negative reals such that $\U_j \to 0$ as $j \to \infty$. Let $f$ be a dimension function such that $f/g$ is monotonic and $f/g^{\ka}$ is a dimension function. Suppose that for any ball $B$ in $X$ we have 
	\begin{align} \label{full measure assumption}
	\cH^g\left(B \cap \La\left(g^{-1}\left(\left(\frac{f(\U)}{g(\U)^{\ka}}\right)^{\frac{1}{1-\ka}}\right)\right)\right)=\cH^g(B).
	\end{align}
	Then, for any ball $B$ in $X$ we have 
	\[\cH^f(B \cap \La(\U)) = \cH^f(B).\]
\end{theorem}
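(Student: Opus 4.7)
The plan is to adapt the Cantor-set construction at the heart of the original proofs of Theorem MTP* and Theorem AB*, modifying each generation to accommodate the local scaling exponent $\ka$. To begin, I would reduce the theorem to a local statement: it suffices to exhibit, inside a fixed ball $B\subset X$, a Cantor-like subset $\cK\subseteq B\cap\La(\U)$ together with a probability measure $\mu$ supported on $\cK$ for which the mass distribution principle yields $\cH^f(\cK)\ge c\,\cH^f(B)$ for some absolute constant $c>0$. Standard arguments about Hausdorff measures then promote this to the required equality $\cH^f(B\cap\La(\U))=\cH^f(B)$ for every ball.

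The Cantor construction would proceed stage by stage. Suppose a finite collection $\cB_n$ of pairwise disjoint balls has already been built. For each $B_n\in\cB_n$, apply the full-$\cH^g$-measure hypothesis \eqref{full measure assumption} to $B_n$; this supplies infinitely many indices $j$ for which $B_n\cap\Delta(F_j,\tilde\U_j)$ is non-empty, where
\[
\tilde\U_j:=g^{-1}\!\left(\left(\frac{f(\U_j)}{g(\U_j)^{\ka}}\right)^{\!1/(1-\ka)}\right).
\]
Choosing $j$ large and invoking the doubling property of $g$, one may extract a point $x_j\in F_j$ with $B(x_j,\tilde\U_j)$ lying comfortably inside $B_n$. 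Applying the LSP \eqref{local scaling} with $r=\tilde\U_j$ and $\delta=\U_j$, together with \eqref{gball}, one packs inside $B(x_j,\tilde\U_j)\cap\Delta(F_j,\U_j)$ a family of disjoint balls of radius $\U_j$ whose cardinality is comparable to $g(\tilde\U_j)^{\ka}/g(\U_j)^{\ka}$, each of which automatically lies in $\Delta(F_j,\U_j)$. Collecting these balls over all $B_n$ furnishes the next generation $\cB_{n+1}$, and by construction $\cK:=\bigcap_{n}\bigcup_{B\in\cB_n}B$ is contained in $\La(\U)$.

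A measure $\mu$ is then defined by distributing mass uniformly among descendants: a child ball in $\cB_{n+1}$ receives the fraction $g(\U_j)^{\ka}/g(\tilde\U_j)^{\ka}$ of the mass of its parent. Since the defining relation of $\tilde\U_j$ can be rewritten as $g(\tilde\U_j)^{1-\ka}g(\U_j)^{\ka}=f(\U_j)$, a telescoping calculation shows that a typical ball of radius $\U_j$ at level $n+1$ is assigned mass $\asymp f(\U_j)$. The crucial step is to upgrade this into the estimate $\mu(B(y,r))\lesssim f(r)$ for \emph{all} sufficiently small $r$, not just for radii that appear as construction scales; once this is done, the mass distribution principle delivers $\cH^f(\cK)\gtrsim\mu(\cK)=\mu(B)$, which is what we need.

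The main obstacle is precisely this last estimate. One must consider separately the regimes where $r$ lies below, between, or above the scales $\U_j$ and $\tilde\U_j$ of the stage of the construction containing $B(y,r)$, and in each case control the number of generation-$(n+1)$ balls hit by $B(y,r)$. This is where the hypotheses that $f/g$ is monotonic and that $f/g^{\ka}$ is itself a dimension function play their essential role: the former governs comparisons of $f$ between successive stages of the construction, while the latter guarantees that the intrinsic $\ka$-dimensional scaling coming from the LSP is consistent with the target dimension function $f$. Once the mass-distribution estimate is proved in all regimes, the theorem follows.
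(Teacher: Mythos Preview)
Your outline has the right shape but a genuine structural gap that would prevent the argument from closing.

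The main case is $f(r)/g(r)\to\infty$ as $r\to0$, in which $\cH^f(B)=\infty$ for every ball, so the target is $\cH^f(B\cap\La(\U))=\infty$. The mass distribution principle applied to a \emph{single} probability measure with $\mu(D)\le c\,V^f(D)$ only yields $\cH^f(\cK)\ge 1/c$, a finite number; it cannot give ``$\cH^f(\cK)\ge c\,\cH^f(B)$'' as you write. The paper addresses this by building, for each $\eta>1$, a Cantor set $\K_\eta$ with a measure satisfying $\mu(D)\ll V^f(D)/\eta$, and then letting $\eta\to\infty$. Your construction has no parameter playing the role of $\eta$, so it cannot produce arbitrarily large Hausdorff measure.

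The reason no such parameter appears is that your construction is too sparse. At each stage you select a \emph{single} intermediate ball $B(x_j,\tilde\U_j)$ inside each parent $B_n$ and subdivide only that. The paper instead uses two devices you omit. First, a ``$K_{G,B}$-lemma'' (an application of the full-measure hypothesis plus a $5r$-covering argument) that packs into $B_n$ \emph{many} disjoint balls $A$ of radii $\tilde\U_j$ with $\sum V^g(A)\gg V^g(B_n)$. Second, a sub-level mechanism: this packing is repeated $l_{B_n}$ times inside $B_n$, with $l_{B_0}\asymp\eta/\cH^g(B_0)$ at the first stage and $l_{B_n}\asymp V^f(B_n)/V^g(B_n)$ thereafter. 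It is precisely the combination of these two that yields $\sum_{(A;j)} g(\tilde\U_j)\ge V^f(B_n)$ (and $\ge\eta$ at the first stage), from which the genuine telescoping bound $\mu(L)\ll V^f(L)/\eta$ follows. With only one intermediate ball and no sub-levels, your claimed ``telescoping calculation'' giving $\mu(L)\asymp f(\U_j)$ is in fact false: a first-generation child receives mass $\asymp(g(\U_{j})/g(\tilde\U_{j}))^\ka$, which equals $f(\U_j)$ only when $g(\tilde\U_j)\asymp 1$.

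Finally, the paper's treatment of the arbitrary-ball estimate $\mu(D)\ll V^f(D)/\eta$ splits into three cases according to how $D$ interacts with the collections $\caj$ across sub-levels; the case where $D$ meets several balls of a single $\caj$ is where both the LSP and the dimension-function hypothesis on $f/g^\ka$ are actually used, while monotonicity of $f/g$ is used in a different case. Your sketch conflates these regimes with the coarser trichotomy ``below/between/above $\U_j$ and $\tilde\U_j$'', which does not match how the hypotheses enter.
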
}

Taking $(F_j)_{j \in \N}$ to be a sequence of points in $X$ and $\kappa = 0$, as in Example~\ref{points example}, Theorem~\ref{general mtp theorem} coincides with Theorem MTP* given above. If, further, we insist that $X = \R^n$ and $g(r) = r^n$, we recover the usual Mass Transference Principle. Theorem AB can also be deduced as a special case of Theorem~\ref{general mtp theorem} by taking $X = \Omega$ to be a suitable ball in $\R^n$, $(F_j)_{j \in \N}$ to be a sequence of $l$-dimensional planes $(R_j)_{j \in \N}$ in $\R^n$, $\kappa = \frac{l}{n}$, and $g(r)=r^n$. 

Restricting our attention to Euclidean space and $s$-dimensional Hausdorff measures, Theorem~\ref{general mtp theorem} takes the following simpler form. 
\begin{corollary} \label{general euclidean mtp theorem}
	Let $\cF:=(F_j)_{j \in \N}$ be a sequence of subsets of $\mathbb{R}^n$ satisfying the LSP with respect to some $0\leq \kappa <1$ and let $\U := (\U_j)_{j \in \N}$ be a sequence of non-negative reals such that $\U_j \to 0$ as $j \to \infty$. Let $s>\kappa n$ and suppose that for any ball $B$ in $\mathbb{R}^n$ we have 
	\begin{align*}
	\cH^n(B\cap \La(\U^{\frac{s-\kappa n}{(1-\kappa)n}}) )=\cH^n(B).
	\end{align*}
	Then, for any ball $B$ in $X$ we have 
	\[\cH^s(B \cap \La(\U)) = \cH^s(B).\]
\end{corollary}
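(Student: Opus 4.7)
The plan is to derive Corollary 1 as a direct specialization of Theorem \ref{general mtp theorem} to Euclidean space with power-function dimension functions. Take $X = \R^n$ equipped with the Euclidean metric, choose $g(r) = r^n$, and choose $f(r) = r^s$. With these choices, $\cH^g$ coincides with $\cH^n$, and $\cH^f = \cH^s$, so the conclusion of Theorem \ref{general mtp theorem} reduces to precisely the conclusion of Corollary \ref{general euclidean mtp theorem}.

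Next I would verify the standing hypotheses of Theorem \ref{general mtp theorem}. The space $\R^n$ is locally compact, $g(r) = r^n$ is doubling (with constant $2^n$), and the ball-mass comparability \eqref{gball} holds because $\cH^n(B(x,r))$ is a fixed constant multiple of $r^n$. The LSP assumed in Corollary \ref{general euclidean mtp theorem} is exactly \eqref{local scaling} for this choice of $g$, which the paper has already rewritten explicitly as \eqref{Euclidean local scaling}. Finally, $f/g = r^{s-n}$ is monotonic, and $f/g^{\kappa} = r^{s-\kappa n}$ is a dimension function precisely because the hypothesis $s > \kappa n$ guarantees a positive exponent, so that $r^{s-\kappa n}$ is increasing and tends to $0$ as $r \to 0$.

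The remaining step is to match the Lebesgue measure hypothesis. A direct computation gives
\[
g^{-1}\!\left(\left(\frac{f(\U)}{g(\U)^{\kappa}}\right)^{\frac{1}{1-\kappa}}\right) = \left(\U^{s-\kappa n}\right)^{\frac{1}{(1-\kappa)n}} = \U^{\frac{s-\kappa n}{(1-\kappa)n}},
\]
so the assumption \eqref{full measure assumption} in Theorem \ref{general mtp theorem} reads as exactly the Lebesgue-measure assumption in Corollary \ref{general euclidean mtp theorem}. Applying Theorem \ref{general mtp theorem} then yields $\cH^s(B \cap \La(\U)) = \cH^s(B)$ for every ball $B$, which completes the proof. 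There is essentially no substantive obstacle here: all the work lies in proving Theorem \ref{general mtp theorem}, and once that is available, Corollary \ref{general euclidean mtp theorem} is obtained by the routine translation above.
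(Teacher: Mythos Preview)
Your proposal is correct and is exactly the approach intended by the paper: Corollary~\ref{general euclidean mtp theorem} is stated as the immediate specialization of Theorem~\ref{general mtp theorem} to $X=\R^n$, $g(r)=r^n$, $f(r)=r^s$, and no separate proof is given. Your verification of the hypotheses and the computation of $g^{-1}\big((f(\U)/g(\U)^{\kappa})^{1/(1-\kappa)}\big)=\U^{(s-\kappa n)/((1-\kappa)n)}$ are accurate.
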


\subsection{Structure of the paper}
The rest of the paper is arranged as follows. Some examples of sets satisfying the LSP are outlined in Section \ref{lsp examples}. In Section \ref{preliminaries section} we recall some useful preliminaries from geometric measure theory. In Section \ref{KGB Section} we prove some key technical lemmas which will be required throughout the proof of Theorem~\ref{general mtp theorem}, which is presented in full in Section \ref{proof section}. Finally, in Section \ref{Application}, we use Theorem~\ref{general mtp theorem} to deduce the Hausdorff dimension and measure of some random $\limsup$ sets.


\section{Some sets satisfying the local scaling property}\label{lsp examples}

In this section we provide some discussion on connections between the LSP and box counting dimension and Minkowski content. Thereafter, we briefly detail two collections of sets that satisfy the LSP for appropriate choices of parameters.

\subsection{Box counting dimension, Minkowski content and the LSP} \label{box dimension}

In this section, we highlight the connection between \eqref{Euclidean local scaling} and the Minkowski/box counting dimension of a set and why these quantities are likely to be related to $\ka$ for sets satisfying the LSP.

Instead of the usual box counting definition of lower and upper box counting dimension (see, for example, \cite{Falconer ref}), there is the following equivalent notion that is defined using the volume of a $\delta$-neighbourhood of a set. Given a bounded set $F\subset \mathbb{R}^n$, the \emph{lower} and \emph{upper box counting dimension of $F$} are defined to be, respectively, 
$$\ldimb(F):=n-\limsup_{\delta\to 0} \frac{\log \mathcal{H}^{n}(\Delta(F,\delta))}{\log \delta}$$ 
and 
$$\udimb(F):=n-\liminf_{\delta\to 0} \frac{\log \mathcal{H}^{n}(\Delta(F,\delta))}{\log \delta}.$$ 
When these limits coincide we call the common value the \emph{box counting dimension of $F$} and denote it by $\dimb(F)$. It follows from this definition that if~$F$ were a set whose box counting dimension exists, and we were interested in whether~\eqref{Euclidean local scaling} held for some $\kappa$, then the natural candidate for $\kappa$ would be $\dim_{B}(F)\cdot n^{-1}.$

Two other useful quantities that describe how the volume of a $\delta$-neighbourhood of a set $F$ scales are the lower and upper Minkowski content. Given a bounded set $F$ contained in $\mathbb{R}^n$ whose box counting dimension exists, we define the \emph{lower} and \emph{upper Minkowski content} to be, respectively,
$$\underline{M}(F):=\liminf_{\delta \to 0} \delta^{n-\dimb(F)}\cdot\cL^n(\Delta(F,\delta))$$ and $$\overline{M}(F):=\limsup_{\delta \to 0} \delta^{n-\dimb(F)}\cdot\cL^n(\Delta(F,\delta)).$$
Here $\cL^n$ is the $n$-dimensional Lebesgue measure. When $\underline{M}(F)=\overline{M}(F)$ we call the common value the \emph{Minkowski content} and denote it by $M(F).$ Determining conditions under which a set $F$ has both $\underline{M}(F)$ and $\overline{M}(F)$  positive and finite is a well studied problem, see \cite{Kombrink ref}. Equation \eqref{Euclidean local scaling} does not follow directly from the positivity and finiteness of both $\underline{M}(F)$ and $\overline{M}(F).$ However, if $F$ were reasonably homogeneous in small neighbourhoods, we would expect this property to be sufficient to deduce \eqref{Euclidean local scaling}. 

\subsection{Smooth compact manifolds}
Let $M\subset \mathbb{R}^n$ be a smooth compact manifold of dimension $l$. Let us start by remarking that when $M$ is a smooth compact manifold, the tangent space map sending $x\to T_x M$ is a continuous map from $M$ into the Grassmanian of $l$-dimensional subspaces of $\mathbb{R}^n$. Fixing $x\in M$, and applying a rotation if necessary, we can identify $T_x M$ with $\mathbb{R}^l \times \{0^{n-l}\}.$ Since the tangent space map is continuous we know that for any $y$ sufficiently close to $x$ its tangent space $T_y M$ is approximately $\mathbb{R}^l \times \{0^{n-l}\}.$ As such, we can assert that there exists $R_x>0$ such that the following two properties hold:
\begin{itemize}
	\item For any $y\in B(x,R_x)\cap M$ and $r>0$ such that $B(y,r)\subset B(x,R_x),$ we have 
	\begin{equation}
	\label{projection volume}
	\cH^l(\pi_l(B(y,r)\cap M))\asymp r^l.
	\end{equation}Here $\pi_l$ is the projection map from $\mathbb{R}^n$ to $\mathbb{R}^l$ sending $(x_1,\ldots,x_n)$ to $(x_1,\ldots,x_l)$.
	\item For any $y\in B(x,R_x)\cap M$ and $\delta<R_x$ we have 
	\begin{equation}
	\label{fibre volume}
	\cH^{n-l}(\{z\in\Delta(M,\delta):\pi(z)=\pi(y)\})\asymp \delta^{n-l}.
	\end{equation}
\end{itemize}The implied constants in \eqref{projection volume} and \eqref{fibre volume} depend only upon  $M$ and $x$. Via an application of \eqref{projection volume}, \eqref{fibre volume}, and Fubini's Theorem, it can be shown that for any $y\in B(x,R_x/2),$ $r\leq R_x/2,$ and $\delta\leq r$ we have 
\begin{equation}
\label{local scaling property at x}
\cH^n(B(y,r)\cap \Delta(M,\delta))\asymp r^l \delta^{n-l}.
\end{equation} 
Again, the implied constants in \eqref{local scaling property at x} depend only upon $M$ and $x$. Since $\{B(x,R_x/2)\}_{x\in M}$ covers $M,$ it follows by a compactness argument that there exists $R>0$ such that for any $y\in M,$ $r\leq R$ and $\delta\leq r$ we have $$\cH^n(B(y,r)\cap \Delta(M,\delta))\asymp r^l \delta^{n-l}.$$ 
Thus, we conclude that $M$ satisfies the LSP with respect to $\ka = \frac{l}{n}$.

\subsection{Self-similar sets satisfying the open set condition}
Let $\Phi=\{\phi_1,\ldots,\phi_l\}$ be a collection of contracting similarities acting on $\R^n$; that is, $\Phi$ is a collection of maps such that
\[|\phi_i(x)-\phi_i(y)| = r_i|x-y| \quad \text{for all } x,y \in \R^n,\]
and $0 < r_i < 1$ for each $1 \leq i \leq l$. It is well-known (see, for example, \cite{Falconer ref}) that there exists a unique non-empty compact set $K\subset \mathbb{R}^n$ such that 
$$K=\bigcup_{i=1}^l\phi_i(K).$$ 

We say that $\Phi$ satisfies the \emph{open set condition} if there exists an open set $O\subset\mathbb{R}^n$ such that $\phi_i(O)\subset O$ for each $1 \leq i \leq l$ and $\phi_i(O)\cap \phi_j(O)=\emptyset$ whenever $i\neq j$. In \cite{Gat} it is shown that if $\Phi$ satisfies the open set condition then there exist constants $b_1,b_2>0$ such that for all $\delta$ sufficiently small
\begin{equation}
\label{self-similar scaling}
b_1 \delta^{n-d}\leq \cH^n(\Delta(K,\delta))\leq b_2 \delta^{n-d},
\end{equation}
where $d$ is the box counting dimension of $K$. In \cite{Gat}, \eqref{self-similar scaling} was proved for the $n$-dimensional Lebesgue measure. The statement given above follows since $\cH^n$ is equal to a scalar multiple of the $n$-dimensional Lebesgue measure. 

We now prove that the local scaling property holds for $K$ with $\ka=\frac{d}{n}$. To this end, fix $x\in K$ and some small number $r > 0$. Then there exists a sequence $(a_i)_{i \in \N}\in\{1,\ldots,l\}^{\mathbb{N}}$ such that 
$$\bigcap_{m=1}^{\infty} (\phi_{a_1}\circ \cdots \circ \phi_{a_m})(K)=x.$$ Next, note that there exists $N\in\mathbb{N}$ such that 
$$(r_{a_1}\cdots r_{a_{N-1}})\cdot Diam K\geq r/2$$ 
and 
$$(r_{a_1}\cdots r_{a_N})\cdot Diam K< r/2.$$ 
For this value of $N$ and $\delta<r/2$, we have 
\begin{align} \label{self-similar LSP lower bound}
\cH^n(B(x,r)\cap \Delta(K,\delta))&\geq \cH^n(B(x,r)\cap \Delta((\phi_{a_1}\circ \cdots \circ \phi_{a_{N}})(K),\delta)) \nonumber \\
&\geq \cH^n(\Delta((\phi_{a_1}\circ \cdots \circ \phi_{a_{N}})(K),\delta)) \nonumber \\
&=\cH^n((\phi_{a_1}\circ \cdots \circ \phi_{a_{N}})(\Delta(K,\delta\cdot(r_{a_1}\cdots r_{a_N})^{-1}))) \nonumber \\
&=(r_{a_1}\cdots r_{a_N})^n\cH^n(\Delta(K,\delta\cdot(r_{a_1}\cdots r_{a_N})^{-1})) \nonumber \\
&\stackrel{\eqref{self-similar scaling}}\geq b_1 \cdot (r_{a_1}\cdots r_{a_N})^n \cdot \left(\frac{\delta}{r_{a_1}\cdots r_{a_N}}\right)^{n-d} \nonumber \\
&= b_1 \cdot (r_{a_1}\cdots r_{a_N})^d\cdot \delta^{n-d} \nonumber \\
&\asymp b_1 r^{d} \delta^{n-d}.
\end{align} 
The last line follows since $r_{a_1}\cdots r_{a_N} \asymp r$ by our choice of $N$. Thus, we have proved that the lower bound in the LSP holds in this case. It remains to prove the upper bound.

Given $r>0$ let 
$$I_r:=\{(a_1,\ldots,a_k)\in\cup_{j=0}^{\infty}\{1,\ldots, l\}^j:r_{a_1}\cdots r_{a_k}\leq r <r_{a_1}\cdots r_{a_{k-1}}\}.$$
Importantly $\{(\phi_{a_1}\circ \cdots \circ \phi_{a_k})(K)\}_{(a_1,\ldots,a_k)\in I_r}$ forms a cover of $K$. In \cite{Falconer ref} it is shown that when the open set condition holds, for any $x\in K$ and $r>0$ we have
\begin{equation}
\label{OSC bound}\#\{(a_1,\ldots,a_k)\in I_r:B(x,r)\cap (\phi_{a_1}\circ \cdots \circ \phi_{a_k})(K)\neq \emptyset\}\leq C 
\end{equation} 
for some $C>0$ independent of $r$. 

Fixing $x\in K$ and $r>0$ and using \eqref{self-similar scaling} and \eqref{OSC bound}, for $\delta < \frac{r}{2}$ we obtain
\begin{align} \label{self-similar LSP upper bound} 
\cH^n(B(x,r)\cap \Delta(K,\delta))&\leq \sum_{\substack{(a_1,\ldots,a_k)\in I_r\\B(x,r)\cap (\phi_{a_1}\circ \cdots \circ \phi_{a_k})(K)\neq \emptyset}}\cH^n(\Delta((\phi_{a_1}\circ \cdots \circ \phi_{a_k})(K),\delta)) \nonumber \\
&= \sum_{\substack{(a_1,\ldots,a_k)\in I_r\\B(x,r)\cap (\phi_{a_1}\circ \cdots \circ \phi_{a_k})(K)\neq \emptyset}}\cH^n((\phi_{a_1}\circ \cdots \circ \phi_{a_k})(\Delta(K,\delta\cdot (r_{a_1}\cdots r_{a_k})^{-1}))) \nonumber \\
&=\sum_{\substack{(a_1,\ldots,a_k)\in I_r\\B(x,r)\cap (\phi_{a_1}\circ \cdots \circ \phi_{a_k})(K)\neq \emptyset}} (r_{a_1}\cdots r_{a_k})^n \cH^n(\Delta(K,\delta\cdot (r_{a_1}\cdots r_{a_k})^{-1})) \nonumber \\
&\stackrel{\eqref{self-similar scaling}}\leq b_2 \sum_{\substack{(a_1,\ldots,a_k)\in I_r\\B(x,r)\cap (\phi_{a_1}\circ \cdots \circ \phi_{a_k})(K)\neq \emptyset}}(r_{a_1}\cdots r_{a_k})^n \left(\frac{\delta}{r_{a_1}\cdots r_{a_k}}\right)^{n-d} \nonumber \\
&\leq b_2 \sum_{\substack{(a_1,\ldots,a_k)\in I_r\\B(x,r)\cap (\phi_{a_1}\circ \cdots \circ \phi_{a_k})(K)\neq \emptyset}} r^d \delta^{n-d} \nonumber \\
&\stackrel{\eqref{OSC bound}}\leq b_2C r^d \delta^{n-d}.
\end{align} 
Combining \eqref{self-similar LSP lower bound} and \eqref{self-similar LSP upper bound}, we see that $K$ satisfies \eqref{Euclidean local scaling} for $\delta<r/2$. Equation \eqref{Euclidean local scaling} trivially holds for $\delta\in[r/2,r)$ since, in that case, $B(x,r)\cap \Delta(K,\delta)$ contains a ball of radius $r/2$ and is contained in a ball of radius $r$. Therefore \eqref{Euclidean local scaling} holds for all $\delta<r$ and $K$ satisfies the LSP with respect to $\ka = \frac{d}{n}$ as claimed.

\section{Preliminaries} \label{preliminaries section}
In this section we state some definitions and recall some well known facts from geometric measure theory. Throughout this paper we will say that \mbox{$f : \R^+ \to \R^+$} is a {\em dimension function} if $f$ is a left continuous, non-decreasing function such that $f(r)\to 0$ as $r\to 0 \, $. Given a ball $B:=B(x,r)$ in $X$ and a dimension function $f$, we define
\[ V^f(B)\,:=\,f(r).\] 

The Hausdorff $f$-measure with respect to the dimension function $f$ is defined as follows. Suppose $F \subset X$, let $f$ be a dimension function and let $\rho>0$. A \emph{$\rho$-cover for $F$} is any countable collection of balls $\{B_i\}_{i \in \N}$ with $r(B_i) < \rho$ for every $i \in \N$ and $F \subset \bigcup_{i \in \N}{B_i}$. We define
\[ {\cal H}^{f}_{\rho} (F)  :=  \inf \left\{ \sum_{i} V^f(B_i): \left\{B_{i} \right\} \text{is a $\rho$--cover for $F$}\right\}.\]
The {\it Hausdorff $f$-measure} of $F$ with respect to
the dimension function $f$ is then defined as
\[ {\cal H}^{f} (F) := \lim_{ \rho \rightarrow 0} {\cal H}^{f}_{\rho} (F). \]
A simple consequence of the definition of $ {\cal H}^f $ is the
following useful fact (see, for example, \cite{Falconer ref}).

\begin{lemma} \label{dimfunlemma}
Let $(X,d)$ be as above. Suppose $f$ and $g$ are dimension functions such that the ratio $f(r)/g(r) \to 0 $ as $ r \to 0 $, then ${\cal H}^{f} (F) =0 $ whenever \mbox{$\cH^g (F) < \infty $.}
\end{lemma}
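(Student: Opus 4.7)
The plan is to reduce this to a direct comparison of the $\rho$-premeasures $\cH^f_\rho$ and $\cH^g_\rho$ on the same cover, then send $\rho \to 0$.

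First, I would fix an arbitrary $\varepsilon > 0$ and exploit the hypothesis $f(r)/g(r) \to 0$ as $r \to 0$ to extract a threshold $\rho_0 = \rho_0(\varepsilon) > 0$ such that $f(r) \leq \varepsilon\, g(r)$ for every $0 < r < \rho_0$. This step uses nothing beyond the stated hypothesis.

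Next, for any $\rho \in (0, \rho_0)$, I would take an arbitrary $\rho$-cover $\{B_i\}_{i \in \N}$ of $F$. Since $r(B_i) < \rho < \rho_0$ for every $i$, the pointwise inequality above yields $V^f(B_i) = f(r(B_i)) \leq \varepsilon\, g(r(B_i)) = \varepsilon\, V^g(B_i)$. Summing and then taking the infimum over all such $\rho$-covers gives $\cH^f_\rho(F) \leq \varepsilon\, \cH^g_\rho(F)$. Now using the monotonicity $\cH^g_\rho(F) \leq \cH^g(F) < \infty$ (since restricting to smaller $\rho$ shrinks the family of admissible covers, so $\cH^g_\rho$ is non-decreasing as $\rho \downarrow 0$), I conclude $\cH^f_\rho(F) \leq \varepsilon\, \cH^g(F)$.

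Finally, letting $\rho \to 0$ gives $\cH^f(F) \leq \varepsilon\, \cH^g(F)$, and since $\varepsilon > 0$ was arbitrary and $\cH^g(F)$ is finite, we obtain $\cH^f(F) = 0$, as required. There is no real obstacle here: the argument is essentially a one-line comparison at the level of covers, with the finiteness of $\cH^g(F)$ playing the role of absorbing the constant when $\varepsilon$ is taken to zero. The only minor care needed is in correctly using the monotonicity of $\cH^g_\rho$ in $\rho$ to pass to the uniform bound by $\cH^g(F)$ before taking the limit.
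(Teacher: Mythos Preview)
Your argument is correct and is precisely the standard proof of this fact. The paper itself does not give a proof of this lemma but simply cites Falconer's textbook, where the same cover-by-cover comparison you describe is used; so there is nothing further to compare.
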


When  $f(r) = r^s$ ($s \geq 0$), the measure $ \hf $
is the familiar $s$-dimensional Hausdorff measure, which we denote by $\hs$. The \emph{Hausdorff dimension}, $\dimh F$, of a set $F$ is defined as
\[ \dimh(F) \, := \, \inf \left\{ s \geq 0 : {\cal H}^{s} (F) =0 \right\}. \]

When calculating the Hausdorff dimension of a set, a usual strategy is to obtain upper and lower bounds separately. It is often the case that calculating an upper bound is relatively straightforward while determining a lower bound is much more difficult. Nevertheless, a standard tool which can frequently be employed in obtaining lower bounds for Hausdorff dimension is the following Mass Distribution Principle.

%
%
%
\begin{lemma}[Mass Distribution Principle] \label{mass distribution principle}
 Let $ \mu $ be a probability measure supported on a subset $F$ of $X$.
Suppose there are  positive constants $c$ and $r_0$ such that 
$$
\mu ( B ) \leq \, c \;  V^f(B) \;
$$ 
for any ball $B$ with radius $r \leq r_0 \, $. If $E$ is a subset of $F$ with $\mu(E) = \lambda > 0$  then $ {\cal H}^{f} (E) \geq \lambda/c$.
\end{lemma}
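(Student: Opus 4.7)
The proof is a direct unwinding of the definitions of $\rho$-cover and Hausdorff $f$-measure, with the crucial input being the hypothesised bound $\mu(B) \leq c \, V^f(B)$ for small balls. The strategy is to take an arbitrary $\rho$-cover of $E$ by balls of small radius, use countable subadditivity of the measure $\mu$ to transfer mass from $E$ to the cover, then estimate each term via the hypothesis to pick up a factor of $c$ times $\sum_i V^f(B_i)$. Passing to the infimum and then letting $\rho \to 0$ yields the claim.

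More precisely, the plan is as follows. Fix any $0 < \rho \leq r_0$ and let $\{B_i\}_{i \in \N}$ be an arbitrary $\rho$-cover of $E$ by balls. Since every $B_i$ has radius at most $\rho \leq r_0$, the hypothesis gives $\mu(B_i) \leq c \, V^f(B_i)$ for each $i$. Using $E \subset \bigcup_i B_i$ together with the countable subadditivity of the Borel probability measure $\mu$, I would write
\[
\lambda = \mu(E) \;\leq\; \mu\!\left(\bigcup_{i\in\N} B_i\right) \;\leq\; \sum_{i \in \N} \mu(B_i) \;\leq\; c \sum_{i \in \N} V^f(B_i).
\]
Rearranging, $\sum_i V^f(B_i) \geq \lambda / c$. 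Since this lower bound is independent of the choice of $\rho$-cover, taking the infimum over all $\rho$-covers of $E$ gives $\cH^f_\rho(E) \geq \lambda / c$.

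Finally, this bound is uniform in $\rho$ (for all $\rho \leq r_0$), so letting $\rho \to 0$ in the definition $\cH^f(E) = \lim_{\rho \to 0} \cH^f_\rho(E)$ yields $\cH^f(E) \geq \lambda / c$, as required. There is no real obstacle here; the only points that demand mild care are (i) ensuring one can apply countable subadditivity, which is clear because $\mu$ is a Borel probability measure and balls are Borel, and (ii) only considering $\rho \leq r_0$ so that the hypothesised bound applies to every ball in the cover. All of this is automatic for $\rho$ sufficiently small, which is exactly the regime in which $\cH^f_\rho(E)$ is evaluated before passing to the limit.
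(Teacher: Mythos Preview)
Your argument is correct and is exactly the standard proof of the Mass Distribution Principle. The paper itself does not give a proof of this lemma; it simply refers the reader to \cite[Section~2]{BV MTP}, so there is nothing further to compare.
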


For this precise statement of the Mass Distribution Principle, see \mbox{\cite[Section 2]{BV MTP}.} For further general information regarding Hausdorff measures and dimension we refer the reader to \cite{Falconer ref,Mattila ref}.

Let $B:=B(x,r)$ be a ball in $(X,d)$. For any $\alpha>0$, we denote by  $\alpha B$ the ball $B$ scaled by a factor $\alpha$; i.e.  $\alpha B(x,r):= B(x, \alpha r)$. A useful covering lemma which we will use throughout is the following (see \cite{Mattila ref}).

\begin{lemma}[The $5r$-covering lemma]\label{5r}
Let $(X,d)$ be a metric space. Every family ${\cal F}$ of balls of uniformly bounded diameter in
$X$ contains a disjoint subfamily ${\cal G}$ such that
\[ \bigcup_{B \in {\cal F} } B \ \subset \ \bigcup_{B \in {\cal G} } 5B. \]
\end{lemma}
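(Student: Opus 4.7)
The plan is to build $\mathcal{G}$ by a greedy, radius-stratified selection and then extract the factor $5$ from the triangle inequality.

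First I would set $R := \sup_{B \in \mathcal{F}} r(B) < \infty$, which is finite by the uniform bound on diameters, and partition $\mathcal{F}$ into ``layers'' indexed by scale:
\[
\mathcal{F}_n := \left\{ B \in \mathcal{F} : \frac{R}{2^{n+1}} < r(B) \le \frac{R}{2^n} \right\}, \qquad n = 0, 1, 2, \ldots,
\]
so that $\mathcal{F} = \bigsqcup_{n \ge 0} \mathcal{F}_n$. Within each layer the radii differ by at most a factor of $2$, and this is what will ultimately force the factor $5$.

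Next I would construct the disjoint subfamily $\mathcal{G}$ one layer at a time. Using Zorn's lemma (the partial order being set inclusion on disjoint subfamilies of $\mathcal{F}_n$ all of whose members are disjoint from every ball already chosen), I would select a maximal disjoint subfamily $\mathcal{G}_n \subset \mathcal{F}_n$ such that every $B \in \mathcal{G}_n$ is disjoint from every ball in $\mathcal{G}_0 \cup \cdots \cup \mathcal{G}_{n-1}$. Setting $\mathcal{G} := \bigcup_{n \ge 0} \mathcal{G}_n$ gives a disjoint subfamily by construction.

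It remains to verify the covering property. Fix $B \in \mathcal{F}$ and let $n$ be the unique index with $B \in \mathcal{F}_n$. By maximality of $\mathcal{G}_n$, either $B \in \mathcal{G}_n$ (in which case $B \subset 5B$ trivially), or there exists some $B' \in \mathcal{G}_0 \cup \cdots \cup \mathcal{G}_n$ with $B \cap B' \neq \emptyset$. In the latter case, $B'$ lives in some layer $k \le n$, so
\[
r(B') > \frac{R}{2^{k+1}} \ge \frac{R}{2^{n+1}} \ge \frac{r(B)}{2}.
\]
Writing $B = B(x, r(B))$ and $B' = B(y, r(B'))$, pick any $z \in B \cap B'$; then for every $w \in B$,
\[
d(w, y) \le d(w, x) + d(x, z) + d(z, y) < 2 r(B) + r(B') < 4 r(B') + r(B') = 5 r(B'),
\]
so $B \subset 5B'$, as required.

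The only non-routine ingredient is the maximal-disjoint-subfamily step, which requires Zorn's lemma in full generality (for a countable $\mathcal{F}$ one could instead use a greedy induction). Everything else is the standard trick of stratifying by dyadic scales so that any later ball one might want to add is not much smaller than a ball already selected that it meets — this is what keeps the blow-up factor down to~$5$.
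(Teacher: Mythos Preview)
Your argument is correct and is essentially the standard proof (dyadic stratification by radius, then greedy maximal selection layer by layer, then the triangle-inequality computation giving the factor~$5$). The paper itself does not supply a proof of Lemma~\ref{5r}; it simply cites Mattila's \emph{Geometry of Sets and Measures in Euclidean Spaces}, where the very argument you wrote down appears (Theorem~2.1 there). So there is nothing to compare against beyond noting that your proof coincides with the textbook one the paper defers to.

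One tiny remark: your partition $\mathcal{F}=\bigsqcup_{n\ge0}\mathcal{F}_n$ silently omits any balls with $r(B)=0$ (i.e.\ singletons). This is harmless---such a ``ball'' is covered by any $5B'$ it meets, and if it meets none you can add it to $\mathcal{G}$ without affecting disjointness---but you might mention it in a sentence if you want the argument to be airtight.
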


We will also make use of the following adaptation of \cite[Lemma 4]{BV Zero-one}.
 
\begin{lemma} \label{finite case lemma}
Let $B$ be a ball in the locally compact metric space $(X,d)$ and let $g$ be a doubling dimension function. Let $(S_i)_{i \in \N}$ be a sequence of subsets in $B$ and let $(\delta_i)_{i \in \N}$ be a sequence of positive numbers such that $\delta_i \to 0$ as $i \to \infty$. Let 
\[\Delta(S_i,\delta_i):=\{x \in X: d(S_i,x)<\delta_i).\]
Then, for any real number $C > 1$, 
\[\cH^g(\limsup_{i \to \infty}{\Delta(S_i,\delta_i))} = \cH^g(\limsup_{i \to \infty}{\Delta(S_i, C\delta_i))}.\]
\end{lemma}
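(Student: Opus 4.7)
The plan is to prove the two inequalities separately. The trivial inclusion $\Delta(S_i,\delta_i)\subseteq\Delta(S_i,C\delta_i)$ for each $i$ immediately gives $\cH^g(\limsup_i\Delta(S_i,\delta_i))\leq\cH^g(\limsup_i\Delta(S_i,C\delta_i))$; the bulk of the work is the reverse inequality. Write $L:=\limsup_i\Delta(S_i,\delta_i)$, $L_C:=\limsup_i\Delta(S_i,C\delta_i)$, and $U_k:=\bigcup_{i\geq k}\Delta(S_i,\delta_i)$, so that $L=\bigcap_k U_k$. Since $L_C\setminus L=\bigcup_{k=1}^{\infty}(L_C\setminus U_k)$, it suffices to prove $\cH^g(L_C\setminus U_k)=0$ for each $k$. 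Note that every $\Delta(S_j,\delta_j)$ is open because $d(\cdot,S_j)$ is $1$-Lipschitz, so $U_k$ is Borel and in particular $\cH^g$-measurable.

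The heart of the argument is a density lower bound. Fix $k$ and $x\in L_C\setminus U_k$. For infinitely many $i\geq k$ there exists $y_i\in S_i$ with $d(x,y_i)<C\delta_i$; for any such $i$ one has the two inclusions $B(y_i,\delta_i)\subseteq\Delta(S_i,\delta_i)\subseteq U_k$ and $B(y_i,\delta_i)\subseteq B(x,(C+1)\delta_i)$. Applying \eqref{gball} to both balls and using the doubling property of $g$, I would deduce
\[
\frac{\cH^g\!\left(U_k\cap B\!\left(x,(C+1)\delta_i\right)\right)}{\cH^g\!\left(B\!\left(x,(C+1)\delta_i\right)\right)}\;\geq\;\frac{c_1\,g(\delta_i)}{c_2\,g\!\left((C+1)\delta_i\right)}\;\geq\;\eta
\]
for some positive constant $\eta=\eta(c_1,c_2,\lambda,C)$, valid for all $i$ large enough that $(C+1)\delta_i<r_0$. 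Since $\delta_i\to 0$, this shows that the upper density of the measurable set $U_k$ at $x$ is at least $\eta$, for every $x\in L_C\setminus U_k$.

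On the other hand, the combination of \eqref{gball} with the doubling property of $g$ makes $\cH^g$ a doubling Radon measure at small scales, so the Lebesgue differentiation theorem is applicable in $(X,d,\cH^g)$: for $\cH^g$-almost every $x\notin U_k$, the density of $U_k$ at $x$ tends to zero. The density lower bound established above thus forces $L_C\setminus U_k$ to lie inside the exceptional null set, whence $\cH^g(L_C\setminus U_k)=0$. Taking a countable union over $k$ yields $\cH^g(L_C\setminus L)=0$, and since $L\subseteq L_C$ this gives the desired equality (with both sides equal to $\infty$ in the degenerate case).

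The main obstacle I anticipate is justifying the applicability of the Lebesgue differentiation theorem in this setting. This rests on the Vitali covering property, which is standard for doubling Radon measures on metric spaces. Everything else reduces to tracking the doubling constants and unpacking the definitions of $L$, $L_C$, and $U_k$.
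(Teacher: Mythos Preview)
Your proposal is correct and follows essentially the same approach as the paper. The paper does not spell out a proof but simply remarks that one adapts \cite[Lemma~4]{BV Zero-one} to the metric-space setting using a Lebesgue Density Theorem for $(X,d,\cH^g)$ (citing \cite{Rigot}); your density argument via the inclusion $B(y_i,\delta_i)\subset U_k\cap B(x,(C{+}1)\delta_i)$ together with \eqref{gball} and the doubling of $g$ is precisely how that adaptation goes.
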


Note that \cite[Lemma 4]{BV Zero-one} is stated in the setting of Euclidean space. Going through the steps in the proof of this lemma one can verify the above analogue holds in our setting. To prove this analogue we require a notion of the Lebesgue Density Theorem that holds for our metric space $(X,d)$ equipped with the measure $\cH^g$. Such an analogue is known to exist when $g$ satisfies our doubling bound \eqref{doubling}, see for example \cite{Rigot}.

In what follows we use the Vinogradov notation, writing $A \ll B$ if $A \leq cB$ for some positive constant $c$ and $A \gg B$ if $A \geq c'B$ for some positive constant~$c'$. If $A \ll B$ and $A \gg B$ we write $A \asymp B$ and say that $A$ and $B$ are \emph{comparable}.

\section{The $K_{G,B}$-Lemma}
\label{KGB Section}
Before proving Theorem~\ref{general mtp theorem}, we formulate suitable analogues of \cite[Lemma 4]{AB ref} and \mbox{\cite[Lemma 5]{AB ref}} which will be required in the present setting. Let 
\[\tU_j:=g^{-1}\left(\left(\frac{f(\U_j)}{g(\U_j)^{\ka}}\right)^{\frac{1}{1-\ka}}\right).\] Given a ball $B$ in $X$ and $j \in \N,$ we define
\[\Phi_j(B):=\{B(x,\tU_j)\subset B:x\in F_j\}.\]
The following is the analogue of \cite[Lemma 4]{AB ref} or \cite[Lemma 5]{BV MTP}, the so-called $K_{G,B}$-Lemma, we obtain in the setting currently under consideration.

\begin{lemma} \label{kgb lemma}
Let $(X,d)$, $\cF$, $\U$, $g$ and $f$ be as given in Theorem~\ref{general mtp theorem} and assume that the hypotheses of Theorem~\ref{general mtp theorem} hold. Then, for any ball $B$ in $X$ and any $G \in \N$, there exists a finite collection 
\[K_{G,B} \subset \{(A;j): j \geq G, A \in \Phi_j(B)\}\]
satisfying the following properties:
\begin{enumerate}[{\rm(i)}]
\item{if $(A;j) \in K_{G,B}$ then $3A \subset B$;}
\item{if $(A;j), (A',j') \in K_{G,B}$ are distinct then $3A \cap 3A' \neq \emptyset$; and}
\item{there exists a constant $c_5\geq 0$ independent of our choice of ball $B$ such that$$\displaystyle{\cH^g\left(\bigcup_{(A;j) \in K_{G,B}}{A}\right) \geq c_5 \cH^g(B).}$$ }
\end{enumerate}
\end{lemma}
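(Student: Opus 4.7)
My plan is to produce $K_{G,B}$ in three stages: use \eqref{full measure assumption} to generate an abundant family of candidate balls in $\Phi_j(B)$ with $j \geq G$; thin that family via the $5r$-covering lemma so that the $3$-dilates are pairwise disjoint; and truncate to a finite subcollection that still carries a definite fraction of $\cH^g(B)$. The workhorses are the doubling property \eqref{doubling} together with \eqref{gball}, which let me compare $\cH^g(A)$ with $\cH^g(15A)$ up to a universal constant. Notably, I do not expect to need the LSP \eqref{local scaling} at this stage; that condition will only enter later, when converting information about the balls $A$ into statements at scale $\U_j$.

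Write $\tU_j := g^{-1}\bigl((f(\U_j)/g(\U_j)^{\ka})^{1/(1-\ka)}\bigr)$ and $\La(\tU) := \limsup_j \Delta(F_j,\tU_j)$. Fix a ball $B$ and $G \in \N$. Applying \eqref{full measure assumption} to the concentric ball $\tfrac12 B$ gives $\cH^g(\tfrac12 B \cap \La(\tU)) = \cH^g(\tfrac12 B)$, and doubling combined with \eqref{gball} yields $\cH^g(\tfrac12 B) \gg \cH^g(B)$. Since $\tU_j \to 0$, pick $G^* \geq G$ so that $\tU_j < r(B)/10$ for every $j \geq G^*$. For each $x \in \tfrac12 B \cap \La(\tU)$ select some $j = j(x) \geq G^*$ with $x \in \Delta(F_j,\tU_j)$, then choose $y(x) \in F_j$ with $d(x,y(x)) < \tU_j$; the ball $A(x) := B(y(x),\tU_j)$ lies in $\Phi_j(B)$, contains $x$, and satisfies $3A(x) \subset B$.

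Apply Lemma~\ref{5r} to the family $\{3A(x)\}_x$ (of uniformly bounded diameter) to extract a disjoint subfamily $\{3A_i\}_{i \in I}$ with $\bigcup_{i \in I} 15 A_i \supseteq \bigcup_x 3A(x) \supseteq \tfrac12 B \cap \La(\tU)$. By \eqref{gball} and the doubling condition iterated (using $15 < 16 = 2^4$), $\cH^g(15 A_i) \leq (c_2/c_1)\lambda^4 \cH^g(A_i)$, so
\[
\sum_{i \in I} \cH^g(A_i) \;\geq\; \frac{c_1}{c_2\lambda^4}\,\cH^g\!\Bigl(\bigcup_{i \in I} 15 A_i\Bigr) \;\geq\; \frac{c_1}{c_2\lambda^4}\,\cH^g\!\bigl(\tfrac12 B\bigr) \;\gg\; \cH^g(B),
\]
with every implicit constant depending only on $c_1, c_2, \lambda$. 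Disjointness of the $3A_i$ inside $B$, together with $\cH^g(B) < \infty$ via \eqref{gball}, makes the series convergent, so I can discard all but finitely many indices while keeping, say, at least half of the total. Declaring this finite set of pairs $(A_i;j_i)$ to be $K_{G,B}$ yields (i) and (ii) by construction and (iii) via $\cH^g\bigl(\bigcup A\bigr) = \sum \cH^g(A) \geq c_5 \cH^g(B)$.

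The main technical point will be verifying that $c_5$ is genuinely independent of $B$: each step above only contributes factors coming from $c_1, c_2$, and $\lambda$, so this follows by careful bookkeeping. The adaptation of \cite[Lemma~5]{BV MTP} and \cite[Lemma~4]{AB ref} to the current metric setting is otherwise essentially formal, the only genuinely metric-space input being the doubling assumption on $g$, which both drives the comparison between $\cH^g(A)$ and $\cH^g(15A)$ and underlies the Lebesgue-differentiation-type facts implicit in \eqref{gball}.
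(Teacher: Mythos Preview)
Your proposal is correct and follows essentially the same route as the paper: invoke the full-measure hypothesis \eqref{full measure assumption} on a shrunken ball, apply the $5r$-covering lemma to obtain a family whose $3$-dilates are disjoint, use doubling together with \eqref{gball} to push the $\cH^g$-mass from the $15$-dilates back down to the balls themselves, and then truncate. The only cosmetic difference is that the paper first introduces the auxiliary families $\Phi_j^3(B)$ of radius-$3\tU_j$ balls and scales by $\tfrac13$ after applying Lemma~\ref{5r}, whereas you select a ball $A(x)$ for each $x\in\tfrac12 B\cap\La(\tU)$ and apply Lemma~\ref{5r} directly to the $3$-dilates; the two bookkeeping schemes are interchangeable.
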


Similarly to \cite[Lemma 4]{AB ref}, the collection $\kgb$ here is a collection of balls drawn from the families $\Pj(B)$. These balls correspond to the $\limsup$ set $\La(\tU)$. From each of these balls what we are actually interested in is extracting a suitable collection of balls corresponding to the $\limsup$ set $\La(\U)$. We adopt the notation from~\cite{AB ref} and write $\aj$ for a generic ball from $\kgb$ to ``remember'' the index $j$ of the family $\Pj(B)$ that the ball $A$ comes from, but just write $A$ if we are referring only to the ball $A$ (as opposed to the pair $\aj$). Making such a distinction is necessary for us to be able to choose the ``right'' collection of balls within $A$ that at the same time lie in an $\U_j$-neighbourhood of the relevant $F_j$. Indeed, for $j\neq j'$ we could have $A = A'$ for some $A \in \Pj(B)$ and $A' \in \Phi_{j'}(B)$.


\begin{proof}[Proof of Lemma \ref{kgb lemma}]
For $j \in \N$ and a fixed ball $B$ in $X$, consider the set of balls
\[\Pj^3(B) := \{B(x,3\tU_j) \subset B: x \in F_j\}.\]
It follows from our assumption (\ref{full measure assumption}) that for any $G\ge1$ we have
\[ \cH^g\left(\bigcup_{j \geq G}{(\Delta(F_j, 3\tU_j) \cap B)}\right) = \cH^g(B).\]
Observe that $\tU_j \to 0$ as $j \to \infty$ because $g$ and $f/g^{\ka}$ are dimension functions. Therefore for $j \in \N$ sufficiently large, 
\[{\bigcup_{L \in \Pj^3(B)}{L}} \supset \Delta(F_j, 3\tU_j) \cap \frac{1}{2}B.\]
Therefore for any sufficiently large $G \in \N$, we have 
\[\cH^g\left(\bigcup_{j \geq G}{\bigcup_{L\in \Pj^3(B)}{L}}\right) \geq \cH^g\left(\bigcup_{j \geq G}{\left(\Delta(F_j, 3\tU_j) \cap \frac{1}{2}B\right)}\right) = \cH^g\left(\frac{1}{2}B\right).
\]
Suppose $G' \in \N$ is large enough that the above inequality holds for any $G \geq G'$. Clearly for any $G < G'$ we also have 
\[\bigcup_{j \geq G}{\bigcup_{L \in \Pj^3(B)}{L}} \supset \bigcup_{j \geq G'}{\bigcup_{L \in \Pj^3(B)}{L}}.\]
Therefore for any $G \in \N$ it follows that
\begin{align} \label{kgb lemma Hausdorff}
\cH^g\left(\bigcup_{j \geq G}{\bigcup_{L \in \Pj^3(B)}{L}}\right) &\geq \cH^g\left(\frac{1}{2}B\right).
\end{align}
%
Next, by the $5r$-covering Lemma (Lemma \ref{5r}), there exists a disjoint subcollection \mbox{$\cG \subset \{\Lj:j\ge G,~L\in \Pj^3(B)\}$} satisfying
\[ \bigcup_{(L;j) \in \cG}^{\circ}{L} \subset \bigcup_{j \geq G}{\bigcup_{L \in \Pj^3(B)}{L}} \subset \bigcup_{(L;j) \in \cG}{5L}.\]

Let $\cG':=\{(\tfrac13L;j):(L;j)\in\cG\}$ be the balls from the collection $\cG$ all scaled by a factor of $1/3$. Note that the balls in $\cG'$ are still disjoint when scaled by 3. By the above, we have that
\begin{align} \label{kgb lemma inclusions}
\bigcup_{\aj \in \cG'}^{\circ}{A} ~\subset~ \bigcup_{j \geq G}~{\bigcup_{L\in \Pj^3(B)}{L}} \subset \bigcup_{\aj \in \cG'}{15A}.
\end{align}
It follows from \eqref{doubling}, \eqref{gball}, and the disjointness of the balls in $\cG'$ that
\begin{align*}
\cH^g\left(\bigcup_{\aj \in \cG'}{A}\right) = \sum_{\aj \in \cG'}{\cH^g(A)} &\stackrel{\eqref{gball}}\asymp  \sum_{\aj \in \cG'}g(r(A))\\
&\stackrel{\eqref{doubling}}\gg  \sum_{\aj \in \cG'}g(r(15A))\\
&\stackrel{\eqref{gball}}\asymp \sum_{\aj \in \cG'}\cH^g(15A) \\
&\geq \cH^g\left(\bigcup_{\aj \in \cG'}{15A}\right).
\end{align*}
Now also using (\ref{kgb lemma Hausdorff}) and (\ref{kgb lemma inclusions}), we see that
\begin{align*}
\cH^g\left(\bigcup_{\aj \in \cG'}{A}\right) &\stackrel{\eqref{kgb lemma inclusions}}\gg \cH^g\left(\bigcup_{j \geq G}{\bigcup_{L \in \Pj^3(B)}{L}}\right) \\
                                              &\stackrel{\eqref{kgb lemma Hausdorff}} \geq \cH^g\left(\frac{1}{2}B\right) \\
                                              &\stackrel{\eqref{gball}}\asymp g(r(\frac{1}{2}B))\\
                                              &\stackrel{\eqref{doubling}}\asymp g(r(B))\\
                                              &\stackrel{\eqref{gball}}\asymp \cH^g(B).
\end{align*}
Thus, there exists a constant $c'>0$ such that 
$$c'\cdot \cH^g(B)\leq  \cH^g\left(\bigcup_{\aj \in \cG'}{A}\right).$$

Since the balls in $\cG'$ are disjoint and contained in $B$, it follows that
\[ \cH^g\left({\bigcup_{\substack{\aj \in \cG' \\ j \geq N}}A}\right) \to 0 \quad \text{as } N \to \infty.\]
Consequently, there must exist $N_0 \in \N$ such that
\[ \cH^g\left({\bigcup_{\substack{\aj \in \cG' \\ j \geq N_0}}A}\right)  < \frac{c'}{2} \cH^g(B).\]
We define $\kgb$ to be the subcollection of $\aj \in \cG'$ with $G\le j < N_0.$ By the above we see that $\kgb$ is a finite collection of balls while still satisfying the required properties (i)--(iii) with $c_5 = \frac{c'}{2}$.
\end{proof}

As mentioned previously, from each of the balls in $K_{G,B}$ we wish to extract a collection of balls corresponding to $\La(\U)$. The desired properties and existence of such collections are summarised in the following lemma, which constitutes the required analogue of \cite[Lemma 5]{AB ref} in this setting.

\begin{lemma} \label{C(A;n) lemma}
Let $(X,d)$, $\cF$, $\U$, $f$, $g$, and $B$ be as in Lemma~\ref{kgb lemma} and assume that the hypotheses of Theorem~\ref{general mtp theorem} hold. Furthermore, assume that $f(r)/g(r) \to \infty$ as $r \to 0$. Let $\kgb$ be as in Lemma~\ref{kgb lemma}. Then, provided that $G$ is sufficiently large, for any $\aj\in\kgb$ there exists a collection $\caj$ of balls satisfying the following properties:
\begin{enumerate}[{\rm(i)}]
\item{each ball in $\caj$ is of radius $\U_j$ and is centred on $F_j;$}
\item{if $L \in \caj$ then $3L \subset A;$}
\item{if $L, M \in \caj$ are distinct then $3L \cap 3M = \emptyset;$}
\item{~$\displaystyle \cH^g\big(\Delta(F_j,\U_j)\cap \tfrac12A \big)~\ll~\cH^g\left(\bigcup_{L \in \caj}{L}\right) ~\leq~ \,\cH^g\big(\Delta(F_j,\U_j)\cap A \big)$; and}
\item there exist some constants $d_1,d_2 > 0$ such that 
\begin{align} \label{cardinality of C(A;n)}
d_1\times\left(\frac{f(\U_j)}{g(\U_j)}\right)^{\frac{\ka}{1-\ka}} ~\leq~ \# \caj &~\leq~ d_2\left(\frac{f(\U_j)}{g(\U_j)}\right)^{\frac{\ka}{1-\ka}}.
\end{align}
\end{enumerate}
\end{lemma}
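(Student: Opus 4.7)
The plan is to construct $\caj$ by a maximal packing argument inside $A$, then to read off (iv) and (v) by combining the resulting covering statement with the LSP applied to $F_j$ at scales $\U_j\ll\tU_j$.

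First I would verify that these two scales really are well separated. The hypothesis $f(r)/g(r)\to\infty$ as $r\to 0$, together with the definition of $\tU_j$, gives
\[
\frac{g(\tU_j)}{g(\U_j)}=\left(\frac{f(\U_j)}{g(\U_j)}\right)^{\frac{1}{1-\ka}}\longrightarrow\infty,
\]
so $\U_j/\tU_j\to 0$; fix $G$ large enough that $3\U_j<\tU_j/6$ for all $j\geq G$. Next I would use a greedy (or Zorn) argument to choose a maximal collection $\{y_i\}\subset F_j\cap\tfrac12 A$ such that the balls $\{B(y_i,3\U_j)\}_i$ are pairwise disjoint, and set $\caj:=\{B(y_i,\U_j)\}_i$. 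Property (i) holds by construction; (iii) is immediate since $3B(y_i,\U_j)=B(y_i,3\U_j)$ and the enlarged balls were chosen disjoint; (ii) follows from the triangle inequality together with $3\U_j<\tU_j/2$.

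The upper bound in (iv) is immediate because the balls in $\caj$ are disjoint and each lies in $\Delta(F_j,\U_j)\cap A$. For the lower bound, maximality forces every $y\in F_j\cap\tfrac12 A$ to satisfy $d(y,y_i)<6\U_j$ for some $i$, hence
\[
\Delta(F_j,\U_j)\cap\tfrac14 A\ \subset\ \bigcup_i B(y_i,7\U_j).
\]
Combining \eqref{doubling} and \eqref{gball} gives $\cH^g(B(y_i,7\U_j))\asymp \cH^g(B(y_i,\U_j))$, so summing over the disjoint family yields $\cH^g(\Delta(F_j,\U_j)\cap\tfrac14 A)\ll \cH^g(\bigcup_{L\in\caj}L)$. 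Since the centre of $A$ lies in $F_j$, applying the LSP at radii $\tU_j/4$ and $\tU_j/2$ (with $\delta=\U_j$) and invoking the doubling of $g$ shows $\cH^g(\Delta(F_j,\U_j)\cap\tfrac12 A)\asymp \cH^g(\Delta(F_j,\U_j)\cap\tfrac14 A)$, completing (iv). Part (v) then follows from (iv), the comparability $\cH^g(\bigcup_{L\in\caj}L)\asymp \#\caj\cdot g(\U_j)$, and the LSP identity $\cH^g(\Delta(F_j,\U_j)\cap A)\asymp g(\U_j)^{1-\ka}g(\tU_j)^{\ka}$, after substituting the ratio $g(\tU_j)/g(\U_j)=(f(\U_j)/g(\U_j))^{1/(1-\ka)}$ computed above.

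The main obstacle is ensuring that the packing, whose centres are constrained to the smaller ball $F_j\cap\tfrac12 A$, nevertheless captures a definite fraction of the full $\U_j$-neighbourhood of $F_j$ in $A$. The LSP is indispensable here because it provides the cross-scale control of how $\Delta(F_j,\U_j)$ distributes across concentric subballs, and the doubling of $g$ harmlessly absorbs the dilation factors $\tfrac12$, $\tfrac14$, $7$, etc.\ into absolute constants.
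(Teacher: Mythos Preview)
Your proposal is correct and follows essentially the same route as the paper: a maximal $6\U_j$-separated net in $F_j\cap\tfrac12 A$, then the LSP together with the doubling of $g$ to extract (iv) and (v). The only cosmetic difference is that you route the lower bound in (iv) through $\tfrac14 A$ before comparing with $\tfrac12 A$ via the LSP, whereas the paper works directly with $\tfrac12 A$; your extra step is arguably a bit more careful, since it guarantees that the nearest point of $F_j$ to any $z\in\Delta(F_j,\U_j)\cap\tfrac14 A$ actually lies in $\tfrac12 A$. One small remark: your deduction ``$g(\tU_j)/g(\U_j)\to\infty$, so $\U_j/\tU_j\to 0$'' tacitly uses the doubling of $g$ (as the paper makes explicit), so you should flag that when writing it up.
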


\begin{proof}
We begin by showing that
\begin{equation}\label{conv0}
  \frac{\U_j}{\tU_j}\to 0\qquad\text{as }j\to\infty.
\end{equation}
To this end, suppose that $N \in \N$. We aim to show that for all sufficiently large $j \in \N$ we have
\begin{equation}\label{conv1}
\U_j < \frac{\tU_j}{2^N}.
\end{equation}
Observe that \eqref{conv1} holds if
\begin{align}\label{conv2}
g(\U_j) < g\left(\frac{\tU_j}{2^N}\right).
\end{align}
Furthermore, by repeated application of \eqref{doubling}, we see that 
\[g\left(\frac{\tU_j}{2^N}\right) > \frac{g(\tU_j)}{\lambda^N} = \frac{1}{\lambda^N}\left(\frac{f(\U_j)}{g(\U_j)^{\ka}}\right)^{\frac{1}{1-\ka}},\]
where $\lambda$ is the doubling constant. Consequently, \eqref{conv2} holds if 
\[g(\U_j) < \frac{1}{\lambda^N}\left(\frac{f(\U_j)}{g(\U_j)^{\ka}}\right)^{\frac{1}{1-\ka}}.\]
Rearranging the above we get
\[\lambda^{N(1-\ka)} < \frac{f(\U_j)}{g(\U_j)}.\] 
By the assumptions that $\frac{f(r)}{g(r)} \to \infty$ as $r \to 0$ and $\U_j \to 0$ as $j \to \infty$, we see that this inequality holds for sufficiently large $j \in \N$, thus verifying \eqref{conv0}. 

In light of \eqref{conv0} we can assume that $G$ is sufficiently large so that
\begin{equation}\label{sep0}
\text{$6\U_j<\tU_j$ \qquad for any $j\ge G$.}
\end{equation}
Let $ x_1,\dots, x_t\in F_j\cap\tfrac12A$ be a maximal collection of points such that
\begin{equation}\label{sep}
d(x_i,x_{i'})> 6\U_j\qquad\text{if }i\neq i'.
\end{equation}
Define $\caj$ to be the collection of balls
\[ \cC\aj:=\{B( x_1,\U_j),\dots,B( x_t,\U_j)\}\,.\]

By construction, property (i) is satisfied by the collection $\caj$. Next, recall that $A\in\Phi_j(B)$ and so $\tfrac12A$ has radius $\tfrac12\tU_j$.
If $L:=B( x_i,\U_j)\in \cC\aj$ then any $ y\in 3L$ satisfies $d(y,x_i)<3\U_j$. Supposing $x_0$ is the centre of $A$, we also have $d(x_i,x_0)\le\tfrac12\tU_j$. Combining \eqref{sep0} and the triangle inequality we obtain
\[d(y,x_0)\le d(y,x_i)+d(x_i,x_0)\le 3\U_j+\tfrac12\tU_j<\tU_j.\] 
Therefore property~(ii) follows. In addition property~(iii) follows from \eqref{sep}.

It is a consequence of the maximality of $ x_1,\dots,  x_t$ that for any $ x \in F_j\cap\tfrac12A,$ there exists an $ x_i$ from this collection such that $d(x,x_i)\le 6\U_j$. Consequently
\[\Delta(F_j,\U_j)\cap \tfrac12A~\subset~\bigcup_{L\in\cC\aj}7L.\]
Therefore, by \eqref{doubling} and \eqref{gball},
\begin{align*}
\cH^g(\Delta(F_j,\U_j)\cap \tfrac12A)&\leq \cH^g\left(\bigcup_{L \in \caj}{7L}\right) \\
                                     &\le\sum_{L\in\cC\aj}\cH^g(7L)\\
                                     &\ll\sum_{L\in\cC\aj}\cH^g(L)\\
                                     &\ll\cH^g\left(\bigcup^\circ_{L\in\cC\aj}L\right).
\end{align*}
However, by property~(ii), we have
\[\bigcup^\circ_{L\in\cC\aj}L~\subset~\Delta(F_j,\U_j)\cap A.\]
This together with the previous inequality proves property~(iv).

As a byproduct of (\ref{sep0}) we have that $\U_j < \frac{\tU_j}{2}$ for all $j \geq  G.$ Therefore, by the LSP we have 
\[\cH^g\left(\Delta(F_j,\U_j)\cap \frac{1}{2}A \right) ~\asymp~ g\left(\frac{\tU_j}{2}\right)^{\ka}\cdot g(\U_j)^{1-\ka}.\]
Combining this with the doubling property \eqref{doubling} and the LSP we obtain
\begin{align} \label{scaling consequence}
\cH^g\left(\Delta(F_j,\U_j)\cap \frac{1}{2}A \right) ~\asymp~ g(\tU_j)^{\ka}\cdot g(\U_j)^{1-\ka} ~\asymp~ \cH^g\left(\Delta(F_j,\U_j)\cap A \right).
\end{align}
By \eqref{gball} and the disjointness of the balls in $\caj$ we have
\[\cH^g\left(\bigcup_{L \in \caj}{L}\right) ~=~ \sum_{L \in \caj}\cH^g\left(L\right) ~\asymp~ \sum_{L \in \caj}g(\U_j) = \#\caj\,g(\U_j).\] 
Combining the above with (\ref{scaling consequence}) and property (iv) we get
\[\# \caj \asymp \frac{ g(\tU_j)^{\ka}\cdot g(\U_j)^{1-\ka}}{g(\U_j)} = \left(\frac{ g(\tU_j)}{g(\U_j)}\right)^{\ka} = \left(\frac{f(\U_j)}{g(\U_j)}\right)^{\frac{\ka}{1-\ka}}\]
So property (v) holds.
\end{proof}

\section{Proof of Theorem~\ref{general mtp theorem}}
\label{proof section}
\subsection{Strategy}
Fix an arbitrary ball $B_0$ in $X$ and suppose the assumptions of Theorem~\ref{general mtp theorem} hold. Our goal is to show that
\begin{align} \label{ultimate aim}
\cH^f(B_0 \cap \Lambda(\U))=\cH^f(B_0).
\end{align}
Since $f/g$ is monotonic, there are three situations to consider:
\begin{enumerate}[(a)]
\item{$\frac{f(r)}{g(r)} \to \infty$ as $r \to 0$;} \label{infinite}
\item{$\frac{f(r)}{g(r)} \to 0$ as $r \to 0$; and} \label{zero}
\item{$\frac{f(r)}{g(r)} \to \ell$ as $r \to 0$, where $0 < \ell < \infty$.} \label{constant}
\end{enumerate}

If we are in situation \eqref{zero} it follows from Lemma~\ref{dimfunlemma} that $\cH^f(B_0) = 0$. Since $B_0 \cap \Lambda(\U) \subset B_0$ the result follows.

If we are in case \eqref{constant} it can be shown that $\tU_j\asymp \U_j$. It then follows from Lemma~\ref{finite case lemma} that $\cH^g(B_0\cap \Lambda(\tU))=\cH^g(B_0\cap \Lambda(\U))$. In turn, it follows from~\eqref{full measure assumption} that $\cH^g(B_0\cap \Lambda(\U))=\cH^g(B_0)$. Finally, the proof is completed in this case by noting that $\cH^f=\ell \cdot \cH^g$ and therefore $$\cH^f(B_0\cap \Lambda(\U))=\ell \cdot \cH^g(B_0\cap \Lambda(\U))= \ell \cdot  \cH^g(B_0)= \cH^f(B_0).$$

It remains to address case \eqref{infinite}. Thus, from now on we will assume that $\frac{f(r)}{g(r)} \to \infty$ as $r \to 0$. In this case, it is a consequence of Lemma~\ref{dimfunlemma} that \mbox{$\cH^f(B_0) = \infty$.} So, to prove Theorem~\ref{general mtp theorem} it suffices to show that 
$$\cH^f(B_0 \cap \Lambda(\U))=\infty.$$ 
To achieve this goal we will show that for any $\eta>1$, we can construct a Cantor set $\mathbb{K}_\eta$ contained in $B_0 \cap \Lambda(\U)$ which supports a probability measure $\mu$ satisfying 
\begin{equation}
\label{Wanttoshow}
\mu(D)\ll \frac{V^f(D)}{\eta},
\end{equation} 
 for all balls $D$ with sufficiently small radii, where the implicit constants are independent of $D$ and $\eta$. The result then follows from the Mass Distribution Principle (Lemma~\ref{mass distribution principle}) upon taking $\eta$ to be arbitrarily large since the Mass Distribution Principle yields $\cH^f(\K_{\eta}) \geq \eta$ and $\K_{\eta} \subset B_0 \cap \La(\U)$.  

\subsection{Desired properties of $\K_\eta$} \label{properties section}
The construction of the Cantor set we present here is an adaptation of that given in \cite{AB ref} and \cite{BV MTP}. For ease of comparison we will generally adopt the notation used in \cite{AB ref}. 

Fix $\eta > 1$. Our Cantor set $\K_\eta$ will take the form
$$\K_\eta=\bigcap_{n=1}^{\infty} \K(n)$$
where $\K(n)\supset \K(n+1)$. The fact that $(X,d)$ is a locally compact metric space guarantees that $\K_\eta$ is non empty. 

Each level $\K(n)$ of the Cantor set will be a union of balls and we will denote the corresponding set of level $n$ balls by $K(n)$. For each ball $B\in K(n-1)$ we will construct an \emph{$(n,B)$-local level}, henceforth denoted by $K(n,B)$, which will consist of balls contained in $B$. The set of level $n$ balls, $K(n),$ will then be defined by
$$K(n):=\bigcup_{B\in K(n-1)}K(n,B).$$ 
Each $(n,B)$-local level will be constructed of \emph{local sub-levels} and will take the form 
\begin{align} \label{local level}
K(n,B):=\bigcup_{i=1}^{l_B}K(n,B,i),
\end{align}
where $K(n,B,i)$ denotes the $i$th local sub-level and $l_B$ is the number of local sub-levels forming $K(n,B)$. 
What is more, each local sub-level will take the form
\begin{equation}
\label{local sub-level}
K(n,B,i):=\bigcup_{B'\in \cG(n,B,i)}\bigcup_{(A;j)\in K_{G',B'}}\caj.
\end{equation}
Here, $\cG(n,B,i)$ will be a suitable collection of balls contained in $B$ and, for each ball $B' \in \cG(n,B,i)$, $K_{G',B'}$ will be the corresponding finite collection whose existence is asserted by Lemma~\ref{kgb lemma}. The collections $\caj$ will be those arising from Lemma~\ref{C(A;n) lemma}. The set of pairs $(A;j)$ included in \eqref{local sub-level} will be denoted by $\tilde K(n,B,i)$. As such
\begin{align} \label{local level components}
\tilde K(n,B,i):=\bigcup_{B'\in \cG(n,B,i)}K_{G',B'}\quad\textrm{and}\quad K(n,B,i)=\bigcup_{(A;j)\in \tilde K(n,B,i)}\caj.
\end{align}
We will also require that $\K_\eta$ satisfies the following properties.

\subsection*{The properties of levels and sub-levels of $\K_\eta$}

\begin{enumerate}

\item[{\bf(P0)}] $K(1)=\{B_0\}$.

\medskip
	
\item[{\bf(P1)}] For any $n\geq 2$ and $B\in K(n-1)$ the balls $$\{3L: L\in K(n,B)\}$$ are disjoint and contained in $B$.

\medskip

\item[{\bf(P2)}] For any $n\geq 2$, $B\in K(n-1),$ and $i\in\{1,\ldots, l_B\},$ the local sub-level $K(n,B,i)$ is a finite union of some collections $C(A;j)$ of balls satisfying properties (i)--(v) of Lemma~\ref{C(A;n) lemma}. Moreover, the balls $3A$ are disjoint and contained in $B$.

\medskip

\item[{\bf(P3)}] For any $n\geq 2$, $B\in K(n-1),$ and $i\in\{1,\ldots,l_B\},$ we have 
$$\sum_{(A;j)\in \tilde K(n,B,i)}V^g(A) \geq c_6 V^g(B)$$
where 
\[c_6:=\frac{1}{2\lambda} \left(\frac{c_1}{c_2}\right)^2\frac{c_5}{c_7}.\] 
The constants $c_1$ and $c_2$ are those appearing in \eqref{gball}, $c_5$ comes from Lemma~\ref{kgb lemma}~(iii), $\lambda$ is the doubling constant associated with $g$, and $c_7$ is a fixed constant such that 
\[g(r(5B)) \leq c_7g(r(B)) \]
for any ball $B$ in $X$. Note that the existence of $c_7$ is guaranteed by the doubling property \eqref{doubling}.
	
\medskip
	
\item[{\bf(P4)}] For any $n\geq 2$, $B\in K(n-1)$, $i\in \{1,\ldots, l_B -1\}$, $L\in K(n,B,i),$ and $M\in K(n,B,i+1),$ we have 
\[f(r(M))\leq \frac{f(r(L))}{2} \quad \textrm{ and } \quad \frac{f(r(M))}{g(r(M))^{\ka}}\leq \frac{f(r(L))}{2g(r(L))^{\ka}}.\]

\medskip
	
\item[{\bf(P5)}] The number of sub-levels is defined by 
\[ l_B := \left\{ \begin{array}{ll}
	\left[\dfrac{c_2 \eta}{c_6 \cH^g(B)}\right]+1 & \mbox{if $B=B_0:=\K(1)$};\\[5ex]
		\left[\dfrac{V^f(B)}{c_6 V^g(B)}\right]+1 & \mbox{if $B\in K(n)$ with $n\geq 2$}.\end{array} \right. \] 
and $l_B\geq 2$ for $B\in K(n)$ with $n\geq 2$.
\end{enumerate}

\subsection{Existence of $\K_\eta$}
We now prove that it is possible to construct a set $\K_\eta \subset B_0 \cap \La(\U)$ satisfying properties {\bf(P0)--(P5)}. To this end, let
\begin{align} \label{K_l notation}
K_l(n,B):=\bigcup_{i=1}^l K(n,B,i) \quad \textrm{ and } \quad \tilde K_l(n,B):= \bigcup_{i=1}^l \tilde K(n,B,i).
\end{align}

\noindent \textbf{Level 1.} Let $\K(1):=B_0$ so {\bf(P0)} holds. 

All other levels of $\K_\eta$ are defined inductively. Therefore, assume levels $\K(1),\ldots, \K(n-1)$ have been constructed. To construct the $n$th level, we need to construct $(n,B)$-local levels for all balls $B \in K(n-1)$.

\noindent \textbf{Level n.} Fix $B\in K(n-1)$ and let $\varepsilon:=\varepsilon(B)$ be a small constant which will be explicitly determined later. Let $G$ be sufficiently large so Lemma~\ref{kgb lemma} and Lemma~\ref{C(A;n) lemma} can be invoked. We may also assume that $G$ is large enough that

\begin{equation}\label{G1}
3g(\U_j)^{1-{\ka}}<\frac{f(\U_j)}{g(\U_j)^{\ka}}\, \quad \text{for all } j\geq G,
\end{equation}
\begin{equation}\label{G2}
\frac{g(\U_j)}{f(\U_j)}<\varepsilon\, \frac{g(r(B))}{f(r(B))}\, \quad \text{for all } j\geq G,
\end{equation}
and
\begin{equation}
\label{G3}
\left[\frac{f(\U_j)}{c_6 g(\U_j)}\right]\geq 1\, \quad \text{for all } j\geq G.
\end{equation}
Here, $c_6$ is the constant appearing in {\bf(P3)}. Inequalities \eqref{G1}--\eqref{G3} are achievable since $f(r)/g(r)\to \infty$ as $r\to 0.$ 

Recall that the $(n,B)$-local level, $K(n,B)$, consists of local sub-levels. These are defined as follows.

\noindent\textbf{Sub-level 1.} For $B$ and $G$ as above let $K_{G,B}$ be the collection of balls arising from Lemma~\ref{kgb lemma}. We define the first sub-level of $K(n,B)$ to be 
$$K(n,B,1):=\bigcup_{(A;j)\in K_{G,B}} C(A;j).$$
Hence, 
$$\tilde K(n,B,1)= K_{G,B} \quad \textrm{ and } \quad \cG(n,B,1)=\{B\}.$$

\noindent \textbf{Higher sub-levels.}
The higher sub-levels are defined inductively. Suppose the first $l$ sub-levels $K(n,B,1),\ldots, K(n,B,l)$ have been constructed and properties {\bf(P1)--(P4)} hold with $l$ in place of $l_B$. Since we require fairly stringent separation conditions between balls in $\K_{\eta}$, we first verify that there is ``space'' left over in $B$ for the sub-level $K(n,B,l+1)$ after the first $l$ sub-levels, $K(n,B,1),\dots,K(n,B,l)$, have been constructed. Let $$A^{(l)}:=\frac{1}{2}B \setminus \bigcup_{L\in K_{l}(n,B)}4L.$$ We will show that 
\begin{align} \label{enoughspace}
\cH^g(A^{(l)})\geq \frac{1}{2}\cH^g\left(\frac{1}{2}B\right).
\end{align}
Using \eqref{doubling}, \eqref{gball}, and the upper bound for $\#\caj$ given in \eqref{cardinality of C(A;n)}, we obtain
\begin{align*}
\cH^g\left(\bigcup_{L\in K_{l}(n,B)}4L\right)&\leq \sum_{L\in K_{l}(n,B)} \cH^g(4L)\\
                                             &\stackrel{\eqref{gball}}\leq \sum_{L\in K_{l}(n,B)} c_2g(r(4L))\\
                                             &\stackrel{\eqref{doubling}}\leq \sum_{L\in K_{l}(n,B)} c_2 \lambda^2 g(r(L))\\
                                             &= \sum_{i=1}^l \sum_{L\in K(n,B,i)} c_2 \lambda^2 g(r(L))\\
                                             &= \sum_{i=1}^l \sum_{(A;j)\in \tilde K(n,B,i)} c_2 \lambda^2 \# C(A;j) g(\U_j)\\
                                             &\stackrel{\eqref{cardinality of C(A;n)}}\leq \sum_{i=1}^l \sum_{(A;j)\in \tilde K(n,B,i)} c_2 \lambda^2 d_2 \left(\frac{f(\U_j)}{g(\U_j)}\right)^{\frac{\ka}{1-\ka}} g(\U_j)\\
                                             &= c_2 \lambda^2 d_2 \sum_{i=1}^l \sum_{(A;j)\in \tilde K(n,B,i)} \frac{f(\U_j)^{\frac{1}{1-\ka}}}{g(\U_j)^{\frac{\ka}{1-\ka}}}\cdot \frac{g(\U_j)}{f(\U_j)}.
\end{align*}
Recalling condition \eqref{G2} and property {\bf(P2)} we further see that 
\begin{align} \label{space}
\cH^g\left(\bigcup_{L\in K_{l}(n,B)}4L\right)&\stackrel{\eqref{G2}} < \varepsilon c_2 \lambda^2 d_2 \cdot \frac{g(r(B))}{f(r(B))} \sum_{i=1}^l \sum_{(A;j)\in \tilde K(n,B,i)} \frac{f(\U_j)^{\frac{1}{1-\ka}}}{g(\U_j)^{\frac{\ka}{1-\ka}}} \nonumber \\
                                             &= \varepsilon c_2 \lambda^2 d_2 \cdot \frac{g(r(B))}{f(r(B))} \sum_{i=1}^l \sum_{(A;j)\in \tilde K(n,B,i)} g(\tU_j) \nonumber \\
                                             &\stackrel{\eqref{gball}}\leq  \varepsilon \frac{c_2 \lambda^2 d_2}{c_1} \cdot \frac{g(r(B))}{f(r(B))} \sum_{i=1}^l \sum_{(A;j)\in \tilde K(n,B,i)} \cH^g(A) \nonumber \\
                                             &\stackrel{{\bf(P2)}}\leq  \varepsilon \frac{c_2 \lambda^2 d_2}{c_1} \cdot \frac{g(r(B))}{f(r(B))}l \cH^g(B) \nonumber \\
                                             &\leq \varepsilon \frac{c_2 \lambda^2 d_2}{c_1} \cdot \frac{g(r(B))}{f(r(B))}(l_B-1) \cH^g(B) \nonumber \\
                                             &\stackrel{\eqref{gball}} \leq \varepsilon \frac{c_2^2 \lambda^2 d_2}{c_1} \cdot \frac{g(r(B))^2}{f(r(B))}(l_B-1).
\end{align}
To establish \eqref{enoughspace} we will show that
\[\cH^g\left(\bigcup_{L\in K_{l}(n,B)}4L\right) < \frac{1}{2} \cH^g\left(\frac{1}{2}B\right).\]
By \eqref{gball} we have 
\[\frac{1}{2} \cH^g\left(\frac{1}{2}B\right) \geq \frac{c_1}{2}g\left(\frac{1}{2}B\right)\]
and by \eqref{doubling} we have 
\[g\left(\frac{1}{2}B\right) > \frac{1}{\lambda}g(r(B)).\]
Therefore it suffices to show that
\begin{align} \label{enoughtoshow}
\cH^g\left(\bigcup_{L\in K_{l}(n,B)}4L\right) < \frac{c_1}{2\lambda}g(r(B)).
\end{align}
It follows from \eqref{space} that \eqref{enoughtoshow} is implied by
\begin{equation}
\label{willshow}
\varepsilon \frac{c_2^2 \lambda^2 d_2}{c_1} \cdot \frac{g(r(B))^2}{f(r(B))}(l_B-1)<\frac{c_1}{2\lambda}g(r(B)).
\end{equation}
Taking $$\varepsilon(B):=\frac{c_1}{4\lambda}\Big(\frac{c_2^2 \lambda^2 d_2}{c_1} \cdot \frac{g(r(B))}{f(r(B))}(l_B-1)\Big)^{-1}$$ 
we see that \eqref{willshow} is satisfied and, thus, \eqref{enoughtoshow} and \eqref{enoughspace} both hold.


Next, observe that the quantity 
$$d_{min}:=\min\{r(L):L\in K_l(n,B)\}$$
is well-defined and positive since the collection $K_l(n,B)$ is finite. Let 
$$\cA(n,B,l):=\{B(x,d_{\min}):x\in A^{(l)}\}.$$ 
By Lemma~\ref{5r} there exists a disjoint subcollection $\cG(n,B,l+1)$ of $\cA(n,B,l)$ such that
\begin{align} \label{inclusions}
A^{(l)}\subset \bigcup_{B'\in \cA(n,B,l)} B'\subset \bigcup_{B'\in \cG(n,B,l+1)} 5B'.
\end{align}
Note that each element of the collection $\cG(n,B,l+1)$ is a a subset of $B$. Since the balls in this collection are disjoint and all have the same radius, $\cG(n,B,l+1)$ must be finite. Furthermore, by our construction,
\begin{align} \label{disjointness}
B'\cap\bigcap_{L\in K_{l}(n,B)}3L=\emptyset \quad \text{for any } B'\in \cG(n,B,l+1).
\end{align}
By the above \eqref{inclusions} and \eqref{enoughspace} we have
\begin{align} \label{inclusion consequence}
\cH^g\left(\bigcup_{B'\in \cG(n,B,l+1)} 5B'\right)\geq \cH^g(A^{(l)})\geq \frac{1}{2}\cH^g\left(\frac{1}{2}B\right).
\end{align}

Since ${\cal G}(n,B,l+1)$ is a disjoint collection of balls we have the following
\begin{align*}
\cH^g \left( \bigcup_{B' \in {\cal G}(n,B,l+1) } 5 B' \right) &\leq \sum_{B' \in {\cal G}(n,B,l+1)}{\cH^g(5 B')} \\[3ex]
&\stackrel{\eqref{gball}} \leq c_2 \sum_{{B'\in {\cal G}(n,B,l+1)}}{g(r(5B'))} \\[3ex]
&\leq c_2 c_7 \sum_{{B'\in {\cal G}(n,B,l+1)}}{g(r(B'))} \\
&\stackrel{\eqref{gball}} \leq \frac{c_2c_7}{c_1} \sum_{{B'\in {\cal G}(n,B,l+1)}}{\cH^g(B')} \\
&= \frac{c_2c_7}{c_1} \cH^g\left(\bigcup^\circ_{B' \in {\cal G}(n,B,l+1)}  B' \right).
\end{align*}
Combining this with \eqref{inclusion consequence} from above implies
\begin{equation}\label{bl Hausdorff}
\cH^g \left(\bigcup^\circ_{B' \in {\cal G}(n,B,l+1) } B'
\right)\,\ge\,\frac{c_1}{2 c_2 c_7} \ \
\cH^g\Big(\frac{1}{2}B\Big)\,.
\end{equation}

Now, to construct the $(l+1)$th sub-level $K(n,B,l+1)$, let $G'\geq G$ be sufficiently large so that we can apply Lemmas~\ref{kgb lemma} and \ref{C(A;n) lemma} to each ball \mbox{$B'\in\cG(n,B,l+1)$.} Moreover, we assume that $G'$ is sufficiently large so that for every $j \geq G'$,
\begin{align} \label{f and g relations}
f(\U_j) \leq \frac{1}{2}\min_{L \in K_l(n,B)}{f(r(L))} \quad \text{and} \quad \frac{f(\U_j)}{g(\U_j)^{\ka}} \leq \frac{1}{2}\min_{L \in K_l(n,B)}\frac{f(r(L))}{g(r(L))^{\ka}} .
\end{align}
Such a $G'$ exists since there are only finitely many balls in $K_l(n,B)$, $\U_j \to 0$ as $j \to \infty$, and because $f$ and $f/g^{\ka}$ are dimension functions.

To each ball $B'\in \cG(n,B,l+1)$ we apply Lemma~\ref{kgb lemma} to obtain a collection of balls $K_{G',B'}.$ We then define
\[ K(n,B,l+1) := \bigcup_{B'\in \cG(n,B,l+1)}~{\bigcup_{\aj \in K_{G',B'}}}~\caj.\]
Consequently,
$$
\widetilde K(n,B,l+1)=\bigcup_{B'\in\cG(n,B,l+1)}~~K_{G',B'}\,.
$$
As $G' \geq G$, properties (\ref{G1})--(\ref{G3}) remain valid. We now verify that properties \textbf{(P1)}--\textbf{(P5)} hold for this local sub-level.

To prove \textbf{(P1)} holds we first observe that it is satisfied for balls in $\bigcup_{\aj \in K_{G',B'}}\caj$ by the properties of $\caj$ and the fact that the balls in $K_{G',B'}$ are disjoint. The balls in $K_{G',B'}$ are by definition contained in $B'$ and the balls $B' \in \cG(n,B,l+1)$ are disjoint, therefore \textbf{(P1)} is satisfied for all balls $L$ in $K(n,B,l+1)$. Last of all, combining this observation with (\ref{disjointness}) we can conclude that \textbf{(P1)} is satisfied for all balls $L$ in $K_{l+1}(n,B)$. Property \textbf{(P2)} is satisfied for this sub-level because of Lemma~\ref{kgb lemma} (i) and (ii) and because the balls $B' \in \cG(n,B,l+1)$ are disjoint.

We now prove that \textbf{(P3)} still holds for $i = l+1$. Recalling \eqref{gball}, we have
\begin{align*}
\sum_{\aj\in \widetilde K(n,B,l+1)}{V^g(A)}
&= \sum_{B' \in \cG(n,B,l+1)}~{\sum_{\aj \in K_{G',B'}}{V^g(A)}} \\
&\stackrel{\eqref{gball}}\geq \frac{1}{c_2}\sum_{B' \in \cG(n,B,l+1)}~{\sum_{\aj \in K_{G',B'}}{\cH^g(A)}}. \\
\end{align*}
Combining this with Lemma~\ref{kgb lemma}~(iii) and the fact that the balls in $\cG(n,B,l+1)$ are disjoint, we see that
\begin{align*}
\sum_{\aj\in \widetilde K(n,B,l+1)}{V^g(A)}
&\geq \frac{1}{c_2}\sum_{B' \in \cG(n,B,l+1)}{c_5\cH^g\left(B'\right)} \\[1ex]
&= \frac{c_5}{c_2} \cH^g\left(\bigcup_{B' \in \cG(n,B,l+1)}{B'}\right) \\[1ex]
&\stackrel{\eqref{bl Hausdorff}}\geq \frac{c_1}{2c_2c_7}\frac{c_5}{c_2}\cH^g\left(\tfrac{1}{2}B\right) \\[1ex]
&\stackrel{\eqref{gball}}\geq \frac{c_1}{2c_2c_7}\frac{c_5}{c_2}c_1g\left(r(\tfrac{1}{2}B)\right) \\[1ex]
&\stackrel{\eqref{doubling}}\geq \frac{1}{2\lambda}\left(\frac{c_1}{c_2}\right)^2 \frac{c_5}{c_7}g(r(B)) \\[1ex]
&= c_6 V^g(B).
\end{align*}
Property \textbf{(P4)} is satisfied because of (\ref{f and g relations}). Finally, property \textbf{(P5)}, that $l_L \geq 2$ for any ball $L$ in $K(n,B,l+1)$, follows from (\ref{G3}).

Therefore properties \textbf{(P1)}--\textbf{(P5)} are satisfied up to the local sub-level $K(n,B,l+1).$ This establishes the existence of the local level \mbox{$K(n,B) = K_{l_B}(n,B)$} for each $B \in K(n-1)$. This then establishes the existence of the $n$th level $K(n)$ (and also $\K(n)$).

\subsection{The measure $\mu$ on $\K_\eta$}

In what follows we adopt the notation:
$$h:=\frac{f}{g^{\ka}}.$$
We now define our measure
on $\K_\eta$ which we will eventually see satisfies (\ref{Wanttoshow}). For each level we distribute mass according to the following rules.

When $n=1$ we have that $L = B_0 := \K(1)$ and let $\mu(L):=1$. 

For balls in $K(n)$, with $n > 2$, we distribute mass inductively. Therefore, let $n \geq 2$ and suppose $\mu(B)$ is defined for each $B\in K(n-1)$. 
Let $L$ be a ball in $K(n)$. Since the balls in $K(n-1)$ are disjoint there is a unique ball $B\in K(n-1)$ satisfying $L\subset B$. By \eqref{local level}, \eqref{local level components} and \eqref{K_l notation}, we know that
$$
K(n,B) := \bigcup_{\aj\in \widetilde K_{l_B}(n,B)}\caj.
$$
Therefore, $L$ is contained in one of the collections $\cC\ajj$ appearing in the above union. We define the mass on $L$ to be
$$
\mu(L):=\frac{1}{\#\cC\ajj}\times \frac{h(\U_{j'})^{\frac{1}{1-{\ka}}}}{\sum\limits_{\aj\in \widetilde K_{l_B}(n,B)}{h(\U_{j})^{\frac{1}{1-{\ka}}}}}\times\mu(B).
$$
This quantity is well-defined in light of the preceding comment.

Proceeding inductively we see that $\mu$ is defined for each ball appearing in the construction of $\K_{\eta}$. We can extend $\mu$ uniquely in a standard way to all Borel subsets of $X$ to give a probability measure $\mu$ supported on
$\K_{\eta}$ (see, for example, \mbox{\cite[Proposition 1.7]{Falconer ref}} for further details). Given a Borel subset of $X$, say $F$, we let
\[ \mu(F):= \mu(F \cap  \K_{\eta})  \; = \; \inf\;\sum_{L\in{\cal C}(F)}\mu(L),\]
where the infimum is taken over all covers ${\cal C}(F)$ of $F
\cap \K_{\eta}$ by balls  $L \in \bigcup\limits_{n\in\N}K(n)$.

Let us conclude this section by observing that for any $L\in K(n)$ we have
\begin{align} \label{initial L measure estimate}
\mu(L)
&\leq \frac{1}{d_1\left(\frac{f(\U_{j'})}{g(\U_{j'})}\right)^{\frac{\ka}{1-\ka}}}\times \frac{h(\U_{j'})^{\frac{1}{1-\ka}}}{\sum\limits_{\aj\in \widetilde K_{l_B}(n,B)}{h(\U_{j})^{\frac{1}{1-\ka}}}}\times\mu(B) \nonumber \\[3ex]
&= \frac{f(\U_{j'})}{d_1\sum\limits_{\aj\in \widetilde K_{l_B}(n,B)}{h(\U_{j})^{\frac{1}{1-{\ka}}}}}\times\mu(B).
\end{align}
This follows from (\ref{cardinality of C(A;n)}) and the definition of $h$.

\subsection{The measure of a  ball in  the Cantor set construction} \label{Cantor construction measure estimates section}

Our ultimate goal is to prove that \eqref{Wanttoshow} is satisfied for any ball $D$ of sufficiently small radius. Moving towards that goal, we first prove that
\begin{align} \label{measure of balls in Cantor construction}
\mu(L) \ll \frac{V^f(L)}{\eta}
\end{align}
for any ball $L\in K(n)$ for $n \geq 2$.
We start with $n=2$ and then tackle higher levels of the Cantor set by induction. Let us fix a ball \mbox{$L \in K(2)=K(2,B_0)$.} Now, let $\ajj\in\widetilde K_{l_{B_0}}(2,B_0)$ be such that
$L\in \cC\ajj$. Using the upper bound given by (\ref{initial L measure estimate}), the definition of $\mu$, and the fact that $\mu(B_0)=1$, we obtain
\begin{align} \label{level 2 measure 1}
\mu(L) &\leq \frac{f(\U_{j'})}{d_1\sum\limits_{\aj\in \widetilde K_{l_{B_0}}(2,B_0)}{h(\U_{j})^{\frac{1}{1-\ka}}}}~~.
\end{align}
Using properties \textbf{(P3)} and \textbf{(P5)} of the Cantor set construction we obtain
\begin{align*}
\sum\limits_{\aj\in \widetilde K_{l_{B_0}}(2,B_0)}{h(\U_{j})^{\frac{1}{1-{\ka}}}}
&= \sum\limits_{\aj\in \widetilde K_{l_{B_0}}(2,B_0)}{V^g(A)} \\[1ex]
&=\sum_{i=1}^{l_{B_0}}{\sum_{\aj\in \widetilde K(2,B_0,i)}{V^g(A)}} \\[1ex]
&\stackrel{\textbf{(P3)}}\geq \sum_{i=1}^{l_{B_0}}{c_6V^g(B_0)} \\[1ex]
&= l_{B_0}c_6V^g(B_0) \\[1ex]
&\stackrel{\eqref{gball}}\geq l_{B_0}\frac{c_6}{c_2}\cH^g(B_0) \\[1ex]
&\stackrel{\textbf{(P5)}}\geq \frac{c_2\eta}{c_6\cH^g(B_0)}\frac{c_6}{c_2}\cH^g(B_0) ~=~ \eta.
\end{align*}
Combining this estimate with (\ref{level 2 measure 1}), and observing that $f(\U_{j'}) = V^f(L)$, we obtain~\eqref{measure of balls in Cantor construction} as required.

We now consider $n>2$. Assume that (\ref{measure of balls in Cantor construction}) holds for all balls in $K(n-1)$. Let $L$ be an arbitrary ball in $K(n)$ and let $B\in K(n-1)$ be the unique ball such that $L\in K(n,B)$. Moreover, suppose $\ajj\in\widetilde K_{l_{B}}(n,B)$ is the unique $\ajj$ such that $L\in \cC\ajj$.
By (\ref{initial L measure estimate}) and our induction hypothesis we have
\begin{align}
\mu(L) &\ll \frac{f(\U_{j'})}{d_1\sum\limits_{\aj\in \widetilde K_{l_B}(n,B)}{h(\U_{j})^{\frac{1}{1-\ka}}}}\times \frac{V^f(B)}{\eta}.\label{vb8}
\end{align}
Bounding the denominator of \eqref{vb8} we have
\begin{align}
\sum\limits_{\aj\in \widetilde K_{l_B}(n,B)}{h(\U_{j})^{\frac{1}{1-\ka}}}
&= \sum_{i=1}^{l_{B}}{\sum_{\aj\in \widetilde K(n,B,i)}{V^g(A)}}\nonumber \\[1ex]
&\stackrel{\textbf{(P3)}}\geq \sum_{i=1}^{l_{B}}{c_6V^g(B)} \nonumber \\[1ex]
&= l_Bc_6V^g(B) \nonumber \\[1ex]
&\stackrel{\textbf{(P5)}}\geq \frac{V^f(B)}{c_6V^g(B)}c_6V^g(B) \nonumber \\[1ex]
&=V^f(B).\label{vb9}
\end{align}
Combining \eqref{vb8} and \eqref{vb9} we see that (\ref{measure of balls in Cantor construction}) holds for $L$. By induction \eqref{measure of balls in Cantor construction} holds for all $L\in K(n)$ for $n\geq 2$.

\subsection{The measure of an  arbitrary ball} \label{arbitrary ball measure estimates section}

Let $r_0 := \min\{r(B): B \in K(2)\}$ and take an arbitrary ball $D$ such that $r(D) < r_0$. To conclude our proof of Theorem~\ref{general mtp theorem} it suffices to prove (\ref{Wanttoshow}) for $D$, that is we wish to show that
\[\mu(D) \ll \frac{V^f(D)}{\eta},\]
where the implied constant is independent of $D$ and $\eta$. To prove this bound we will make use of the following lemma from \cite{BV MTP}. This statement was originally (implicitly) proved in the setting of Euclidean space equipped with the usual metric. With virtually no change required to the proof, the same statement holds in an arbitrary metric space.
\begin{lemma}\label{separation lemma}
	Let $A:=B(x_A,r_A)$ and $M:=B(x_M,r_M)$ be arbitrary balls in a metric space $(X,d)$ such that $A\cap M\not=\emptyset$ and $A\setminus(cM)\not=\emptyset$ for some $c\ge3$. Then $r_M\,\le \,r_A$ and $cM\subset 5A$.
\end{lemma}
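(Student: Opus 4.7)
The plan is to prove both conclusions by pure triangle inequality manipulations, extracting two-sided bounds on $d(x_A,x_M)$ from the two hypotheses and then combining them. The key observation is that the existence of a point in $A\cap M$ gives an upper bound on the centre distance, while the existence of a point in $A\setminus(cM)$ gives a lower bound; together these will force $r_M$ to be comparable to $r_A$ and, more precisely, strictly smaller than it.

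First, I would fix a point $z\in A\cap M$ and use $d(x_A,x_M)\le d(x_A,z)+d(z,x_M)<r_A+r_M$. Then I would fix a point $w\in A\setminus(cM)$ and use $d(w,x_M)\ge cr_M$ together with $d(w,x_A)<r_A$ to get $cr_M\le d(w,x_M)\le d(w,x_A)+d(x_A,x_M)<r_A+d(x_A,x_M)$, i.e.\ $d(x_A,x_M)>cr_M-r_A$. Combining the two estimates yields $cr_M-r_A<r_A+r_M$, i.e.\ $(c-1)r_M<2r_A$. Since $c\ge 3$, this gives $r_M<r_A$ (in particular $r_M\le r_A$), which is the first conclusion.

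For the inclusion $cM\subset 5A$, I would take an arbitrary $y\in cM$ and apply the triangle inequality in the form
\[
d(y,x_A)\le d(y,x_M)+d(x_M,x_A)<cr_M+(r_A+r_M)=(c+1)r_M+r_A.
\]
Substituting the bound $r_M<\tfrac{2r_A}{c-1}$ established above gives
\[
d(y,x_A)<\frac{2(c+1)}{c-1}r_A+r_A=\frac{3c+1}{c-1}\,r_A.
\]
The function $c\mapsto\tfrac{3c+1}{c-1}$ is decreasing on $(1,\infty)$ and equals $5$ at $c=3$, so for every $c\ge 3$ we obtain $d(y,x_A)<5r_A$, that is, $y\in 5A$. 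This establishes $cM\subset 5A$.

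There is no real obstacle here beyond bookkeeping: the proof is a short two-sided triangle inequality argument, and the only mild subtlety is making sure that the strict inequalities (from open balls and from $A\setminus cM\neq\emptyset$) propagate correctly so that the final bound $d(y,x_A)<5r_A$ is strict even in the borderline case $c=3$. No structure of the ambient space beyond the metric axioms is used, which matches the remark in the excerpt that the Euclidean proof of \cite{BV MTP} transfers verbatim to an arbitrary metric space.
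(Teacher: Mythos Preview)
Your argument is correct: the two-sided triangle-inequality bound on $d(x_A,x_M)$ gives $(c-1)r_M<2r_A$, hence $r_M<r_A$ for $c\ge 3$, and then $d(y,x_A)<\tfrac{3c+1}{c-1}\,r_A\le 5r_A$ for any $y\in cM$. The paper does not actually supply its own proof of this lemma; it simply quotes the statement from \cite{BV MTP} and remarks that the Euclidean proof there carries over verbatim to a general metric space, which is precisely what your triangle-inequality computation confirms.
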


Recall that our measure $\mu$ is supported on $\K_\eta$ and we proved in the previous section that it satisfies the above inequality whenever $D$ is a ball in our Cantor set construction. Consquently, without loss of generality, we may assume that $D$ satisfies the following two properties:
\begin{itemize}
	\item{$D \cap \K_{\eta} \neq \emptyset$;}
	\item{for every $n$ large enough $D$ intersects at least two balls in $K(n)$.}
\end{itemize}

If $D \cap \K_{\eta} = \emptyset$ then $\mu(D)=0$ since $\mu$ is supported on $\K_\eta.$ If the second assumption were false then $D$ would intersect exactly one ball, say $L_{n_i}$, at level~$n_i$ for infinitely many $i \in \N$. Then, by \eqref{measure of balls in Cantor construction}, we would have \mbox{$\mu(D) \leq \mu(L_{n_i}) \to 0$} as~$i\to\infty.$ So, if either of the above two assumptions fail we have $\mu(D)=0$ and (\ref{Wanttoshow}) holds trivially.

By these two assumptions there exists a well-defined maximal integer $n$ such that
\begin{align}\label{number of balls intersected}
\text{$D$ intersects at least 2 balls from $K(n)$}
\end{align}
and
\begin{align*}
\text{$D$ intersects only one ball $B$ from $K(n-1)$}.
\end{align*}

Since $r_0 = \min\{r(B): B \in K(2)\}$ it follows that $n > 2$. Suppose \mbox{$B\in K(n-1)$} is the unique ball which has non-empty intersection with $D$, then we may also assume that $r(D) < r(B)$. To see why, suppose otherwise that $r(B) \leq r(D)$. Since $D \cap \K_\eta \subset B$ and $f$ is increasing, it would follow from (\ref{measure of balls in Cantor construction}) that
\[\mu(D) \leq \mu(B) \ll \frac{V^f(B)}{\eta} = \frac{f(r(B))}{\eta} \leq \frac{f(r(D))}{\eta} = \frac{V^f(D)}{\eta}, \]
and \eqref{Wanttoshow} would be satisfied.

Note that, since $K(n,B)$ forms a cover of $D \cap \K_{\eta}$, we have
\begin{align} \label{measure sum}
\mu(D) &\leq \sum_{i=1}^{l_B}{\sum_{L \in K(n,B,i): L \cap D \neq \emptyset}{\mu(L)}} \nonumber \\
&= \sum_{i=1}^{l_B}{\sum_{\aj \in\widetilde K(n,B,i)}~{\sum_{\substack{L \in \caj \\ L \cap D \neq \emptyset}}{\mu(L)}}} \nonumber \\
&\stackrel{(\ref{measure of balls in Cantor construction})}{\ll}~ \sum_{i=1}^{l_B}{\sum_{\aj \in \widetilde K(n,B,i)}~{\sum_{\substack{L \in \caj \\ L \cap D \neq \emptyset}}{\frac{V^f(L)}{\eta}}}}.
\end{align}

The remainder of the proof of Theorem~\ref{general mtp theorem} will be concerned with showing that \eqref{measure sum} is suitably bounded. In order to perform this task, it is useful to partition sub-levels into the following cases:

\bigskip

\noindent \underline{{\em Case 1}\/} :~ Sub-levels $K(n,B,i)$ for which
\[\#\{L \in K(n,B,i): L \cap D \neq \emptyset\} = 1.\]

\bigskip

\noindent \underline{{\em Case 2}\/} :~ Sub-levels $K(n,B,i)$ for which
\[\#\{L \in K(n,B,i): L \cap D \neq \emptyset\} \geq 2\quad\text{and}\]
\[\#\{\aj\in\widetilde K(n,B,i) \text{ with } D \cap L\neq \emptyset~\text{for some }L\in\caj\} \geq 2.\]

\bigskip

\noindent \underline{{\em Case 3}\/} :~ Sub-levels $K(n,B,i)$ for which
\[\#\{L \in K(n,B,i): L \cap D \neq \emptyset\} \geq 2\quad\text{and}\]
\[\#\{\aj\in\widetilde K(n,B,i) \text{ with } D \cap L\neq \emptyset~\text{for some }L\in\caj\} = 1.\]

Technically we should also consider those sub-levels $K(n,B,i)$ for which \mbox{$\#\{L \in K(n,B,i): L \cap D \neq \emptyset\} = 0$.} However, these sub-levels make no contribution to the sum on the right-hand side of (\ref{measure sum}) and can therefore be omitted.

\medskip

\noindent {\it Dealing with  Case 1}.
Let $K(n,B,i^*)$ be the first sub-level whose intersection with $D$ is described by Case 1. There is a unique ball $L^*$ in $K(n,B,i^*)$ satisfying $L^* \cap D \neq \emptyset$.
We know by (\ref{number of balls intersected}) that there exists another ball $M \in K(n,B)$ such that $M \cap D \neq \emptyset$. Moreover, we also know that $3L^*\cap 3M=\emptyset$ by property \textbf{(P1)}. Therefore $D \setminus 3L^* \neq \emptyset$ and so, applying Lemma~\ref{separation lemma}, we have $r(L^*) \leq r(D)$. Consequently, since $f$ is a dimension function and hence increasing,
\begin{align} \label{case 1 volume comparison}
V^f(L^*) \leq V^f(D).
\end{align}
Using property \textbf{(P4)} we know that for $i\in\{i^*+1,\ldots,l_B\}$ and $L\in
K(n,B,i)$, we have
$$
V^f(L)=f(r(L))\le 2^{-(i-i^*)}\ f(r(L^*))=2^{-(i-i^*)}\ V^f(L^*).
$$
Combining this inequality with (\ref{case 1 volume comparison}), we see that the contribution to the right-hand side of (\ref{measure sum}) from Case 1 is:
\begin{align} \label{case 1}
\sum_{i \in \text{Case 1}}~~{\sum_{\substack{L \in K(n,B,i)\\ L \cap D \neq \emptyset}}{\frac{V^f(L)}{\eta}}}
&\leq \sum_{i\ge i^*}~2^{-(i-i^*)}{\frac{V^f(L^*)}{\eta}} \le 2{\frac{V^f(L^*)}{\eta}}\le2\frac{V^f(D)}{\eta}.
\end{align}

\medskip

\noindent {\it Dealing with  Case 2}.
Let $K(n,B,i)$ be a sub-level whose intersection with~$D$ is described by Case 2. Thus, there exist distinct balls
$\aj$ and $(A';j')$ in $\widetilde K(n,B,i),$ and corresponding balls $L\in\caj$ and $L'\in\cC(A';j')$ satisfying $L \cap D \neq \emptyset$ and $L' \cap D \neq \emptyset$.
Since $L\subset A$ and $L'\subset A'$ we have $A\cap D\neq\emptyset$ and $A'\cap D\neq\emptyset$. By property~\textbf{(P2)} of our construction we know that the the balls $3A$ and $3A'$ are disjoint and contained in $B$. Therefore $D \setminus 3A \neq \emptyset$ and, applying Lemma~\ref{separation lemma}, we see that $r(A) \leq r(D)$ and $A\subset 3A \subset 5D$. By the same reasoning we also have $A'\subset 3A' \subset 5D$. Hence, on using (\ref{cardinality of C(A;n)}) we get that the contribution to the right-hand side of (\ref{measure sum}) from Case 2 is estimated as follows
\begin{align*}
\sum_{i \in \text{Case 2}}~{\sum_{\aj\in \widetilde K(n,B,i)}~{\sum_{\substack{L \in \caj \\ L \cap D \neq \emptyset}}{\frac{V^f(L)}{\eta}}}}
&\leq \sum_{i \in \text{Case 2}}~~{\sum_{\substack{\aj\in\widetilde K(n,B,i) \\ A \subset 5D}}{\#\caj\frac{f(\U_j)}{\eta}}} \\[2ex]
&\stackrel{(\ref{cardinality of C(A;n)})}\ll \sum_{i \in \text{Case 2}}~{\sum_{\substack{\aj\in\widetilde K(n,B,i) \\ A \subset 5D}}{\left(\frac{f(\U_j)}{g(\U_j)}\right)^\frac{\ka}{1-\ka}\frac{f(\U_j)}{\eta}}} \\[2ex]
&= \sum_{i \in \text{Case 2}}~{\sum_{\substack{\aj\in\widetilde K(n,B,i) \\ A \subset 5D}}{\frac{f(\U_j)^{\frac{\ka}{1-\ka}+1}}{g(\U_j)^{\frac{\ka}{1-\ka}}}\times\frac{1}{\eta}}} \\[2ex]
&= \sum_{i \in \text{Case 2}}~{\sum_{\substack{\aj\in\widetilde K(n,B,i) \\ A \subset 5D}}{\frac{f(\U_j)^{\frac{1}{1-\ka}}}{g(\U_j)^{\frac{\ka}{1-\ka}}}\times\frac{1}{\eta}}} \\[2ex]
&= \sum_{i \in \text{Case 2}}~{\sum_{\substack{\aj\in\widetilde K(n,B,i) \\ A \subset 5D}}{\frac{V^g(A)}{\eta}}}.
\end{align*}

It follows upon combining this estimate with \eqref{doubling}, \eqref{gball}, and the disjointness of balls in $\widetilde{K}(n,B,i)$ guaranteed by property \textbf{(P2)},  that
\begin{align*}
\sum_{i \in \text{Case 2}}~{\sum_{\aj\in \widetilde K(n,B,i)}~{\sum_{\substack{L \in \caj \\ L \cap D \neq \emptyset}}{\frac{V^f(L)}{\eta}}}}
&\stackrel{\eqref{gball}}\ll \frac{1}{\eta}\sum_{i \in \text{Case 2}}~{\sum_{\substack{\aj\in\widetilde K(n,B,i) \\ A\subset 5D}}{\cH^g(A)}} \\[2ex]
&\stackrel{\textbf{(P2)}}= \frac{1}{\eta}\sum_{i \in \text{Case 2}}~{\cH^g\left(\bigcup_{\substack{\aj\in\widetilde K(n,B,i) \\ A\subset 5D}}A\right)} \\[2ex]
&\leq \frac{1}{\eta}\sum_{i \in \text{Case 2}}{\cH^g(5D)} \nonumber \\[2ex]
&\stackrel{\eqref{gball}}\ll \frac{1}{\eta}\sum_{i \in \text{Case 2}}{V^g(5D)} \\[2ex]
&\stackrel{\eqref{doubling}}\ll \frac{1}{\eta}\sum_{i \in \text{Case 2}}{V^g(D)} \\[2ex]
&\leq \frac{1}{\eta}l_BV^g(D).
\end{align*}
Finally, it follows from the above estimate together with property {\bf(P5)} that
\begin{align} \label{case 2}
\sum_{i \in \text{Case 2}}~{\sum_{\aj\in \widetilde K(n,B,i)}~{\sum_{\substack{L \in \caj \\ L \cap D \neq \emptyset}}{\frac{V^f(L)}{\eta}}}}
&\stackrel{\textbf{(P5)}}\ll \frac{1}{\eta}\left(\frac{V^f(B)}{V^g(B)}\right)V^g(D) \nonumber \\[2ex]
&\ll \frac{1}{\eta}\frac{V^f(D)}{V^k(D)}V^k(D) \nonumber \\[2ex]
&= \frac{V^f(D)}{\eta}.
\end{align}To deduce the penultimate inequality we used the facts that $f/g$ is decreasing and $r(D) < r(B)$. 

\noindent {\it Dealing with  Case 3}.
For each sub-level $i$ whose intersection with $D$ is described by Case~3 there exists a unique \mbox{$(A_i;j_i)\in\widetilde K(n,B,i)$} such that $D$ has non-empty intersection with balls in $\cC(A_i;j_i)$. Let $K(n,B,i^{**})$ denote the first sub-level described by Case~3. There exists a ball $L^{**}$ in $K(n,B,i^{**})$ such that $L^{**} \cap D \neq \emptyset$.
By the assumption in (\ref{number of balls intersected}) there must exist another ball $M \in K(n,B)$ such that $M \cap D \neq \emptyset$. It follows from property \textbf{(P1)} that $3L^{**}$ and $3M$ are disjoint and so $D \setminus 3L^{**} \neq \emptyset.$ Applying Lemma~\ref{separation lemma}, we have that $r(L^{**}) \leq r(D).$ As $h$ is a dimension function it follows that
\begin{align} \label{case 1 volume comparison+}
h(r(L^{**})) \leq h(r(D)).
\end{align}
By property \textbf{(P4)} we know that for any $i\in\{i^{**}+1,\ldots,l_B\}$ and \mbox{$L\in K(n,B,i)$} we have 
\begin{equation}
\label{halfing bound}
h(r(L))\le 2^{-(i-i^{**})}\ h(r(L^{**})).
\end{equation} 
Recall that, by Lemma~\ref{C(A;n) lemma}, each $L\in \cC(A_i;j_i)$ is centred on $F_{j_i}.$ Combining this fact with the LSP, the relations given by \eqref{gball}, and the fact that the elements of $\cC(A_i;j_i)$ are disjoint, by straightforward measure theoretic considerations we have the following estimate 
\begin{equation}
\label{finalbound}\#\{L \in \cC(A_i;j_i):  L \cap D \neq \emptyset\}\ll \frac{g(\U_{j_i})^{1-\ka}g(r(D))^{\ka}}{g(\U_{j_i})}.
\end{equation}
Therefore, the contribution to the right-hand side of (\ref{measure sum}) from Case 3 can be bounded above as follows:
\begin{align}
\sum_{i \in \text{Case 3}}~{\sum_{\aj\in\widetilde K(n,B,i)}~{\sum_{\substack{L \in \caj \\ L \cap D \neq \emptyset}}{\frac{V^f(L)}{\eta}}}}
&\leq \sum_{i \in \text{Case 3}}~{\sum_{\substack{L \in \cC(A_i;j_i) \\ L \cap D \neq \emptyset}}{\frac{V^f(L)}{\eta}}} \nonumber \\[2ex]
&= \sum_{i \in \text{Case 3}}~{\sum_{\substack{L \in \cC(A_i;j_i) \\ L \cap D \neq \emptyset}}{\frac{f(\U_{j_i})}{\eta}}} \nonumber \\[2ex]
& \stackrel{\eqref{finalbound}}\ll \sum_{i \in \text{Case 3}}~{\frac{g(\U_{j_i})^{1-\ka}g(r(D))^{\ka}}{g(\U_{j_i})}\times\frac{f(\U_{j_i})}{\eta}} \nonumber \\[2ex]
&= \frac{g(r(D))^{\ka}}{\eta}\sum_{i \in \text{Case 3}}{\frac{f(\U_{j_i})}{g(\U_{j_i})^{\ka}}}\nonumber\\[2ex]
&= \frac{g(r(D))^{\ka}}{\eta}\sum_{i \in \text{Case 3}}{h(\U_{j_i})} \nonumber \\[2ex]
&\stackrel{\eqref{halfing bound}}\ll \frac{g(r(D))^{\ka}}{\eta}\sum_{i \geq i^{**}}{\frac{h(\U_{j_{i^{**}}})}{2^{i-i^{**}}}} \nonumber \\[2ex]
&\leq 2\frac{g(r(D))^{\ka}}{\eta}h(\U_{j_{i^{**}}}). \nonumber
\end{align}
Recalling (\ref{case 1 volume comparison+}) and noting that $\U_{j_i} = r(L^{**})$, we observe that
\begin{align}
\sum_{i \in \text{Case 3}}~{\sum_{\aj\in\widetilde K(n,B,i)}~{\sum_{\substack{L \in \caj \\ L \cap D \neq \emptyset}}{\frac{V^f(L)}{\eta}}}} &\ll 2\frac{g(r(D))^{\ka}}{\eta}h(r(D)) \nonumber \\
                                                         &= 2\frac{f(r(D))}{\eta} \ll \frac{V^f(D)}{\eta}\label{case 3}.
\end{align}
Combining estimates (\ref{case 1}), (\ref{case 2}) and (\ref{case 3}) with (\ref{measure sum}) gives $\mu(D) \ll \frac{V^f(D)}{\eta}$, thus proving \eqref{Wanttoshow} as desired. This completes the proof of Theorem~\ref{general mtp theorem}.

\section{An application of Theorem \ref{general mtp theorem}: Random $\limsup$ sets} \label{Application} 

In this section we give an application of Theorem \ref{general mtp theorem} to the study of random $\limsup$ sets, which is classical topic of interest within Probability Theory. We refer the reader to \cite{Jar} and the references therein for more on this problem. 

We start by imposing the additional assumptions that $(X,d)$ is a compact metric space and $g(r):=r^s$ for some $s> 0$. By rescaling if necessary, we may assume without loss of generality that $\mathcal{H}^s(X)=1.$ Let $(F_j)_{j \in \N}$ be a sequence of sets satisfying the LSP with respect to some $0\leq \kappa<1$. Assume that we are given a set of isometries $\Phi$ that are chosen randomly according to some law $\mathbb{P}$. Given a sequence of non-negative real numbers $\U:=(\U_j)_{j \in \N}$ and a randomly chosen sequence $(\phi_j)_{j \in \N} \in\Phi^\mathbb{N},$ we define the corresponding random $\limsup$ set as follows:
$$\Lambda((\phi_j),\U):=\{x\in \Delta(\phi_j(F_j),\U_j) \textrm{ for infinitely many } j\in\mathbb{N}\}.$$

We are interested in determining the $\mathbb{P}$-almost sure Hausdorff dimension and Hausdorff measure of $\Lambda((\phi_j),\U).$ When $(F_j)_{j \in \N}$ is a sequence of points, then the $\mathbb{P}$-almost sure Hausdorff dimension and Hausdorff measure of $\Lambda((\phi_j),\U)$ is well understood. When $(F_j)_{j \in \N}$ is a more exotic sequence of sets, the problem of determining the $\mathbb{P}$-almost sure metric properties of $\Lambda((\phi_j),\U)$ is more difficult. That being said, in the Euclidean setting a comprehensive description of the \mbox{$\mathbb{P}$-almost} sure metric properties of $\Lambda((\phi_j),\U)$ is given in~\cite{FengJar}. See also~\cite{JarJar} and~\cite{Per}. Our application below holds in the more general metric space setting and also provides an alternative proof for some of the important results appearing in~\cite{FengJar}. In what follows we assume that $\mathbb{P}$ satisfies the following properties: 
\begin{itemize} 
\item For any $j\in\mathbb{N}$ and $\delta > 0$ sufficiently small: 
\begin{align} \label{P property 1}
\mathbb{P}(x\in \Delta(\phi_j(F_j),\delta))=\cH^s(\Delta(F_j,\delta)).
\end{align}
\item For any sequence $(\delta_j)_{j \in \N}$ of sufficiently small numbers, we have that the sequence of events $(E_j)_{j=1}^{\infty}=(\{x\in \Delta(\phi_j(F_j),\delta_j)\})_{j=1}^{\infty}$ are independent, i.e. for any finite set $S\subset \mathbb{N}$ we have $$\mathbb{P}\left(\bigcap_{j\in S}E_j\right)=\prod_{j\in S}\mathbb{P}(E_j).$$
\end{itemize}

The main result of this section is Theorem \ref{Random application} below. For convenience it is stated for $(\U_j)_{j \in \N}$ of the form $(j^{-\tau})_{j \in \N}$, although it holds in greater generality. To prove this theorem we will make use of the following well known result from Probability Theory known as the second Borel--Cantelli lemma, see \cite{Bil}.

\begin{lemma}[Second Borel--Cantelli Lemma]
	\label{BC lemma}
	Let $(X,\mathcal{A},\mu)$ be a probability space. Let $(E_j)_{j \in \N}$ be an independent sequence of events such that $\sum_{j=1}^{\infty}\mu(E_j)=\infty,$ then $\mu(\limsup_{j \to \infty} E_j)=1.$
\end{lemma}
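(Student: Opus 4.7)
The plan is to prove the Second Borel--Cantelli Lemma by the standard route of passing to complements and exploiting independence. Concretely, since $\mu(\limsup_{j \to \infty} E_j) = 1$ is equivalent to $\mu(\liminf_{j \to \infty} E_j^c) = 0$, and since
\[
\liminf_{j \to \infty} E_j^c = \bigcup_{n=1}^{\infty}\bigcap_{j=n}^{\infty} E_j^c,
\]
by countable subadditivity it will suffice to show that $\mu\bigl(\bigcap_{j=n}^{\infty} E_j^c\bigr) = 0$ for every fixed $n \in \mathbb{N}$.

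To establish this, I would first invoke continuity of measure from above to write
\[
\mu\!\left(\bigcap_{j=n}^{\infty} E_j^c\right) = \lim_{N \to \infty}\mu\!\left(\bigcap_{j=n}^{N} E_j^c\right).
\]
Next, since independence of the sequence $(E_j)$ implies independence of $(E_j^c)$, the right-hand side factorises as $\prod_{j=n}^{N}(1-\mu(E_j))$. The key analytic step is then the elementary bound $1-x \le e^{-x}$, valid for all $x \in [0,1]$, which yields
\[
\prod_{j=n}^{N}(1-\mu(E_j)) \le \exp\!\left(-\sum_{j=n}^{N}\mu(E_j)\right).
\]
The hypothesis $\sum_{j=1}^{\infty}\mu(E_j) = \infty$ forces the exponent to tend to $-\infty$ as $N \to \infty$, so the product tends to $0$. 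Combining this with the previous display gives $\mu\bigl(\bigcap_{j=n}^{\infty} E_j^c\bigr) = 0$ for every $n$, and the result follows.

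There is no serious obstacle here: the proof is a textbook argument, and the only ingredients needed are (a) the translation to complements, (b) continuity of measure, (c) independence of the complementary events, and (d) the inequality $1-x \le e^{-x}$. The place where one must be slightly careful is ensuring that independence is correctly transferred from $(E_j)$ to $(E_j^c)$ and then applied to an arbitrary finite subfamily, but this is immediate from the definition of independence used in the statement.
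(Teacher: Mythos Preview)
Your proof is correct and is the standard textbook argument. The paper itself does not supply a proof of this lemma: it merely states the result and refers the reader to Billingsley's \emph{Probability and Measure}, so there is no in-paper proof to compare against. Your argument is exactly the one found in such references.
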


\begin{theorem} \label{Random application}
Let $(X,d),$ $\Phi,$ and $\mathbb{P}$ be as above. Suppose $(F_j)_{j \in \N}$ satisfies the LSP with respect to some $0\leq \kappa<1$ and let $\tau>\frac{1}{s-\kappa s}$. Then, for $\mathbb{P}$-almost every $(\phi_j)_{j \in \N} \in \Phi^{\mathbb{N}}$, we have 
\[\dimh(\Lambda((\phi_j),(j^{-\tau})))=\kappa s+\frac{1}{\tau} \quad \text{and} \quad \cH^{\kappa s+1/\tau}(\Lambda((\phi_j),(j^{-\tau})))=\infty.\]
\end{theorem}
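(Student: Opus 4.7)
The plan is to bound $\dim_H(\Lambda((\phi_j),(j^{-\tau})))$ from above by a deterministic covering argument and to obtain both the matching lower bound and the infinite Hausdorff measure statement by invoking Theorem \ref{general mtp theorem} with $g(r)=r^s$ and $f(r)=r^{\kappa s+1/\tau}$. The hypothesis of that theorem will be verified $\mathbb{P}$--almost surely via the second Borel--Cantelli lemma combined with Fubini's theorem.

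For the \textbf{upper bound}, the LSP together with \eqref{gball} and a volume--packing argument applied to a maximal $\delta$--separated subset of $F_j\cap B(y,r_1)$ (for any $y\in F_j$) yields a covering number bound $N(F_j,\delta)\ll\delta^{-\kappa s}$ with an implicit constant independent of $j$. Since isometries preserve this bound, $\Delta(\phi_j(F_j),j^{-\tau})$ is contained in a union of $O(j^{\tau\kappa s})$ balls of radius $2j^{-\tau}$. Consequently, for any $t>\kappa s+1/\tau$,
$$\cH^t(\Lambda)\,\leq\,\liminf_{N\to\infty}\sum_{j\geq N}j^{\tau\kappa s}(2j^{-\tau})^t\,\ll\,\liminf_{N\to\infty}\sum_{j\geq N}j^{\tau\kappa s-\tau t}\,=\,0,$$
so $\dim_H(\Lambda((\phi_j),(j^{-\tau})))\leq\kappa s+1/\tau$ deterministically.

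For the \textbf{lower bound and the infinite measure statement}, I apply Theorem \ref{general mtp theorem} to the random sequence $\cF=(\phi_j(F_j))_{j\in\N}$. Since each $\phi_j$ is an isometry and $\cH^s$ is isometry--invariant, $\cF$ satisfies the LSP with the same $\kappa$ and constants as $(F_j)$ for every realisation. The choice $f(r)=r^{\kappa s+1/\tau}$ makes $f/g=r^{\kappa s+1/\tau-s}$ monotonic and $f/g^\kappa=r^{1/\tau}$ a dimension function; the condition $\tau>1/(s-\kappa s)$ is equivalent to $\kappa s+1/\tau<s$, which together with Lemma \ref{dimfunlemma} and \eqref{gball} gives $\cH^f(B)=\infty$ for every ball $B\subset X$ of positive radius. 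A direct computation with $\U_j=j^{-\tau}$ gives
$$\tU_j\,:=\,g^{-1}\!\left(\left(\frac{f(\U_j)}{g(\U_j)^\kappa}\right)^{\!\frac{1}{1-\kappa}}\right)\,=\,j^{-\frac{1}{s(1-\kappa)}},$$
so that $\tU_j^{s(1-\kappa)}=j^{-1}$.

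It thus remains to verify, $\mathbb{P}$--almost surely, that $\cH^s(B\cap\Lambda((\phi_j),\tU))=\cH^s(B)$ for every ball $B\subset X$. Fix $x\in X$. By \eqref{P property 1} and the independence assumption, the events $E_j:=\{x\in\Delta(\phi_j(F_j),\tU_j)\}$ are independent with $\mathbb{P}(E_j)=\cH^s(\Delta(F_j,\tU_j))$; applying the LSP at some $y\in F_j$ with $r$ slightly less than $r_1$ and $\delta=\tU_j$ (valid for all $j$ large enough) yields $\mathbb{P}(E_j)\geq c_3(r_1/2)^{s\kappa}\tU_j^{s(1-\kappa)}\gg j^{-1}$, so $\sum_j\mathbb{P}(E_j)=\infty$. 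Lemma \ref{BC lemma} therefore gives $\mathbb{P}(x\in\Lambda((\phi_j),\tU))=1$ for every $x\in X$. Applying Fubini on $X\times\Phi^{\N}$ with the product measure $\cH^s\otimes\mathbb{P}$ (finite because $\cH^s(X)=1$) produces a single $\mathbb{P}$--full measure set of realisations on which $\cH^s(X\setminus\Lambda((\phi_j),\tU))=0$; on this set the required full--measure condition holds simultaneously for every ball $B\subset X$. Theorem \ref{general mtp theorem} then delivers $\cH^{\kappa s+1/\tau}(B\cap\Lambda((\phi_j),(j^{-\tau})))=\cH^{\kappa s+1/\tau}(B)=\infty$, which both establishes the infinite measure claim and provides the matching lower bound $\dim_H(\Lambda)\geq\kappa s+1/\tau$. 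The main technical obstacle is the Fubini step, which requires joint measurability of $\Lambda((\phi_j),\tU)$ in $(x,(\phi_j))$; this follows from the measurability of $(x,\phi_j)\mapsto d(x,\phi_j(F_j))$ in the product $\sigma$--algebra and the expression of $\Lambda$ as a countable intersection of countable unions of measurable sets.
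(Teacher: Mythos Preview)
Your proposal is correct and follows essentially the same approach as the paper: a covering/volume argument for the upper bound, and the second Borel--Cantelli lemma combined with Fubini's theorem to verify the full-measure hypothesis \eqref{full measure assumption} of Theorem~\ref{general mtp theorem} for the lower bound and the Hausdorff measure statement. Your treatment is slightly more careful in two places (you explicitly note that $(\phi_j(F_j))_{j\in\N}$ inherits the LSP by isometry invariance of $\cH^s$, and you address the joint measurability needed for Fubini), while the paper uses the $5r$-covering lemma rather than a maximal-separated-set argument for the covering count---but these are cosmetic differences.
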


\begin{proof}
It follows from the LSP that for any $j \in \N$ and $\delta>0$ sufficiently small we have 
\begin{equation}
\label{global scaling}
\cH^s(\Delta(F_j,\delta)) \asymp \delta^{s-\kappa s}.
\end{equation} Using the notation of Section \ref{KGB Section}, we see that when $f(r)=r^t$ we have \mbox{$\tU_j=j^{\frac{\tau(\kappa s-t)}{s-\kappa s}}$.} When $t=\kappa s+1/\tau$ we have 
$$\sum_{j=1}^{\infty}\mathbb{P}(x\in \Delta(\phi_j(F_j),\tU_j))\stackrel{\eqref{P property 1}}=\sum_{j=1}^{\infty}\cH^s(\Delta(F_j,\tU_j))\stackrel{\eqref{global scaling}}\asymp \sum_{j=1}^{\infty}j^{-1}=\infty.$$
Applying Lemma \ref{BC lemma}, we may assert that for a fixed $x\in X$ we have $$\mathbb{P}(x\in \Lambda((\phi_j),\tU))=1.$$ Applying Fubini's Theorem, we obtain that for $\mathbb{P}$-almost every $(\phi_j)_{j \in \N}\in\Phi^{\mathbb{N}}$ we have 
\begin{equation}
\label{Fubini}
\cH^s(\Lambda((\phi_j),\tU))=1,
\end{equation} i.e. our random $\limsup$ set has full measure. Equation \eqref{Fubini} tells us that for almost every $(\phi_j)_{j \in \N}\in\Phi^{\mathbb{N}}$ the assumptions of Theorem~\ref{general mtp theorem} are satisfied. Therefore, we may apply Theorem~\ref{general mtp theorem} and conclude that for almost every $(\phi_j)_{j \in \N}\in\Phi^{\mathbb{N}}$ we have
$$\cH^{\kappa s+1/\tau}(\Lambda((\phi_j),(j^{-\tau})))=\infty,$$
since we assumed that $\cH^s(X)=1$ and $\ka s + 1/\tau < s$. 

To complete our proof, it suffices to show that $\cH^{t}(\Lambda((\phi_j),(j^{-\tau})))=0$ for any $(\phi_j)_{j \in \N}\in\Phi^{\mathbb{N}}$ and $t>\kappa s + 1/\tau.$ Therefore fix $(\phi_j)_{j \in \N}\in\Phi^{\mathbb{N}}$. Applying the $5r$-covering lemma (Lemma~\ref{5r}) we may construct for each $j\in\mathbb{N}$ a finite disjoint collection of balls $\{B(x_{l,j},j^{-\tau})\}_{l\in Y_j}$ such that $\{B(x_{l,j},5j^{-\tau})\}_{l\in Y_j}$ covers $\Delta(\phi(F_j),j^{-\tau})$. Notice that the collection $\{B(x_{l,j},5j^{-\tau})\}_{l\in Y_j,j\geq N}$ covers $\Lambda((\phi_j),(j^{-\tau}))$ for any $N\in\mathbb{N}$. Furthermore, combining \eqref{gball} and \eqref{global scaling} we see that 
\begin{equation}
\label{count}\# Y_j \ll \frac{\mathcal{H}^s(\Delta(\phi_{j}(F_j),j^{-\tau}))}{j^{-\tau s}}\asymp \frac{j^{-\tau(s-\kappa s)}}{j^{-\tau s}}=j^{\tau\kappa s}.
\end{equation}
 Taking $\rho>0$, $\varepsilon>0$ and $t>\kappa s+1/\tau,$ we may choose $N$ sufficiently large such that 
$$\cH^t_{\rho}(\Lambda((\phi_j),(j^{-\tau})))\leq \sum_{j=N}^{\infty}\sum_{l\in Y_j}(5j^{-\tau})^t \stackrel{\eqref{count}}\ll 5^t \sum_{j=N}^{\infty}j^{\tau \kappa s-\tau t}<\varepsilon.$$
Since $\varepsilon$ and $\rho$ were arbitrary we may deduce that $\cH^{t}(\Lambda((\phi_j),(j^{-\tau})))=0$ as required. Therefore, for $\mathbb{P}$-almost every $(\phi_j)_{j \in \N} \in \Phi^{\N}$, we have 
\[\dimh(\La((\phi_j),(j^{-\tau}))) = \ka s + \frac{1}{\tau}.\qedhere\]
\end{proof}

\noindent {\bf Acknowledgements.}
This work began while both authors were in attendance at the program on \emph{Fractal Geometry and Dynamics} at the Mittag--Leffler Institut in November 2017. We are grateful both to the organisers of the program and the staff at the Institut for making it such a wonderful environment for doing mathematics. DA is grateful to Victor Beresnevich and Sanju Velani, her PhD supervisors, for introducing her to the Mass Transference Principle and the topic of Metric Diophantine Approximation in general.

%

%

\Addresses

\end{document}